\theoremstyle{plain}
\newtheorem{thm}{Theorem}[section]
\newtheorem{lem}[thm]{Lemma}
\newtheorem{prop}[thm]{Proposition}
\theoremstyle{definition}
\newtheorem{defn}{Definition}[section]
\newtheorem{Assump}{Assumption}[section]
\theoremstyle{remark}
\newtheorem{rem}{Remark}[section]
\begin{document}
\title{{\Large\bf {
 Optimal control of quantum system in fermion fields: Pontryagin-type maximum principle $\textrm{(I)}$}}}
\author{{\normalsize  Penghui Wang, Shan Wang{\thanks{E-mail addresses: phwang@sdu.edu.cn(P.Wang), 202020244@mail.sdu.edu.cn(S.Wang).}}  \,\,\,\,  } \\
{ School of Mathematics, Shandong University,} {\normalsize Jinan, 250100, China} }
\date{}
\maketitle
\begin{minipage}{14cm} {\bf Abstract }
In this paper, the Pontryagin-type maximum principle for optimal control of quantum stochastic systems in fermion fields is obtained.  These systems have gained significant prominence in numerous quantum applications ranging from physical chemistry to multi-dimensional nuclear magnetic resonance experiments.    
Furthermore, we establish the existence and uniqueness of solutions to backward quantum stochastic differential equations driven by fermion Brownian motion.   
The application of noncommutative martingale inequalities and the martingale representation theorem enables this achievement.
\\
\noindent{\bf 2020 AMS Subject Classification:}  47C15, 49K27, 81S25, 81Q93, 81V74.

\noindent{\bf Keywords.}\ Infinite-dimensional quantum control system; Pontryagin-type maximum principle; Backward quantum stochastic differential equations; Noncommutative martingale representation theorem. 
\end{minipage}
 \maketitle
\numberwithin{equation}{section}
\newtheorem{theorem}{Theorem}[section]
\newtheorem{lemma}[theorem]{Lemma}
\newtheorem{proposition}[theorem]{Proposition}
\newtheorem{corollary}[theorem]{Corollary}
\newtheorem{remark}[theorem]{Remark}


\section{Introduction}\label{Intro}
\indent\indent

Let $\mathscr{H}$ be a separable complex Hilbert space. The anti-symmetric Fock space over $\mathscr{H}$ \cite{B.S.W.1,P.Book} is defined by
\begin{equation*}
\Lambda(\mathscr{H}):=\bigoplus_{n=0}^\infty\Lambda_n(\mathscr{H}),
\end{equation*}
where $\Lambda_n(\mathscr{H})$ is the Hilbert space anti-symmetric $n$-fold tensor product of $\mathscr{H}$ with itself, and $\Lambda_0(\mathscr{H}):=\mathbb{C}$. For any $z\in\mathscr{H}$, the creation operator $C(z):\Lambda_n(\mathscr{H})\rightarrow \Lambda_{n+1}(\mathscr{H})$ defined by $v\mapsto \sqrt{n+1}\ z\wedge v$, is a bounded operator on $\Lambda(\mathscr{H})$ with $\|C(z)\|=\|z\|$. 
The annihilation operator $A(z)$ is the adjoint of $C(z)$, i.e. $A(z)=C(z)^*$.
The fermion field $\Psi(z)$ is defined on $\Lambda(\mathscr{H})$ by
\begin{equation}\label{definition Psi}
\Psi(z):=C(z)+A(Jz),
\end{equation}
where the map $J: \mathscr{H}\rightarrow\mathscr{H}$ is a conjugation operator (i.e., $J$ is antilinear, antiunitary, and $J^2=1$). 
 The canonical anti-commutation relation holds:
\begin{equation}\label{CAR-Psi}
\{\Psi(z),\Psi(z')\}\equiv\Psi(z)\Psi(z')+\Psi(z')\Psi(z)=2\langle Jz',z\rangle I,\quad z,z'\in \mathscr{H}.
\end{equation}
Denote by $\mathscr{C}$ the von Neumann algebra generated by the bounded operators $\{\Psi(z): z\in \mathscr{H}\}$. 
For the Fock vacuum $\Omega\in\Lambda(\mathscr{H})$, define
\begin{equation*}
m(\cdot):=\langle\Omega, \cdot\Omega\rangle_{\Lambda(\mathscr{H})}
\end{equation*}
on $\mathscr{C}$.  By \cite{B.S.W.1,E.B,James.2012,P.Book}, $m(\cdot)$ is a faithful, normal, central state on $\mathscr{C}$, which is also called the quantum expectation with respect to the Fock vacuum and is denoted by $\mathbb{E}(\cdot)$. The space $(\mathscr{C}, m)$ is a quantum (noncommutative) probability space. 
For  $p\in[1,\infty)$, we define the noncommutative $L^p$-norm on $\mathscr{C}$ by
 $$\|f\|_{p}:=m\left(|f|^p\right)^\frac{1}{p}=\left\langle\Omega, |f|^p\Omega\right\rangle_{\Lambda(\mathscr{H})}^\frac{1}{p},$$
where $|f|=(f^*f)^\frac{1}{2}$. The space $L^p(\mathscr{C},m)$ is the completion of $(\mathscr{C}, \|\cdot\|_p)$, which is the noncommutative $L^p$-space, abbreviated as $L^p(\mathscr{C})$.

Quantum (Noncommutative) probability theory has attracted significant attention since it was recognized as a new branch of mathematics.
In particular, quantum stochastic calculus has also received attention with various degrees of completeness in the setting of the bosonic and fermionic Fock space and for Clifford algebras in the works of Segal\cite{S.}, Barnett, Streater and Wilde \cite{B.S.W.1,B.S.W.2,B.S.W.3,S-1}, Applebaum and Hudson \cite{A.,A.H-1,A.H-2,H.L.}, Parthasarathy \cite{P.Book,P.K.R.,P.H}, Belavkin \cite{B}, Gordina \cite{G}, Sinha and Goswami \cite{S.G}.
Among them, Haudson, Lindsay, Barnett, Streater and Wilde investigated the solutions to quantum stochastic differential equations (QSDEs for short).
Belavkin \cite{B} employed quantum stochastic methods that were developed in the 1980s to describe quantum noise and a quantum generalization of the It\^{o} calculus.
Later, Gough, Guta, James and Nurdin \cite{G.G.J.N} developed fermion filtering theory using the fermion quantum stochastic calculus.

In what follows, let $\mathscr{H}=L^2(\mathbb R^+)$, and $Jf=\bar{f}$ for $f\in L^2(\mathbb{R}^+)$. Let $\mathscr{C}$ be von Neumann algebra generated by $\{\Psi(v): v\in L^2(\mathbb{R}^+)\}$, and
$\{\mathscr{C}_t\}_{t\geq 0}$ be an increasing family of von Neumann subalgebras of $\mathscr{C}$ generated by $\{\Psi(v): v\in L^2(\mathbb{R}^+)\ \textrm{and}\ \textrm{ess supp}\ v\subset [0,t]\}$.
The family of von Neumann subalgebras $\{\mathscr{C}_t\}_{t\geq 0}$ is called the filtration of $\mathscr{C}$ \cite{P.X}.
The fermion Brownian motion $W(t)$ is defined by
\begin{equation*}
W(t):=\Psi(\chi_{[0,t]})=C(\chi_{[0,t]})+A(J\chi_{[0,t]}),\quad t\geq 0.
\end{equation*}
Obviously, $W(t)$ is self-adjoint.
In this paper, we consider the controlled QSDE driven by the fermion Brownian motion $W(t)$ in $L^p(\mathscr{C})$:
\begin{equation}\label{FSDE}
 \left\{
 \begin{aligned}
  dx(t)=&D(t,x(t),u(t))dt+F(t,x(t),u(t))dW(t)+dW(t)G(t,x(t),u(t)),\ \textrm{in}\ (0,T], \\
 x(0)=&x_0,
  \end{aligned}
  \right.
\end{equation}
where $x_0\in L^p(\mathscr{C}_{t_0})$ is the initial condition,  $u(\cdot)\in \mathcal{U}(0,T)$ is the control variable. 
Here, the control domain $\mathcal{U}(0,T)$ is defined by
\begin{equation}\label{the admissible control domain}
  \mathcal{U}(0,T):=\left\{u:[0,T]\to U;\ u(\cdot)\ \textrm{is}\ \{\mathscr{C}_{t}\}_{t\geq0}\textrm{-adapted, continuous} \right\},
\end{equation}
where $U$ is a linear subspace of  $L^p(\mathscr{C})$. The control domain $U$ is also a metric space with the metric $d(u_1,u_2)=\|u_1-u_2\|_p$.
As in \cite[(S1), Page 388]{L.Z-2020}, the maps $D(\cdot,\cdot,\cdot), F(\cdot,\cdot,\cdot), G(\cdot,\cdot,\cdot):[0,T]\times L^p(\mathscr{C})\times U\to L^p(\mathscr{C})$ are adapted, where the definition of the adapted maps is provided in Definition \ref{the definition of the adapted maps}. 
In \eqref{FSDE}, $u(\cdot)$ is called the control, while $x(\cdot)=x(\cdot;x_0,u(\cdot))$ is the corresponding state process. It should be noted that in the framework of quantum stochastic calculus, based on the noncommutativity of operators, both the right integral $\int_0^tf(s)dW(s)$ and the left integral $\int_0^tdW(s)f(s)$ exist.

Let $x(\cdot)$ be the solution to \eqref{FSDE} corresponding to the control $u(\cdot)$. 
Define the cost functional $\mathcal{J}(\cdot)$ as follows:
\begin{equation}\label{Cost functional introduced}
  \mathcal{J}(u(\cdot)):=\int_{0}^TL(t,x(t),u(t))dt+h(x(T)),\quad  u(\cdot)\in\mathcal{U}(0,T),
\end{equation}
where the maps $L(\cdot,\cdot,\cdot):[0,T]\times L^p(\mathscr{C})\times U\to \mathbb{R}$ and $h(\cdot):L^p(\mathscr{C}_T)\to \mathbb{R}$. In particular, the maps $\widehat{L}(\cdot,\cdot,\cdot):[0,T]\times L^p(\mathscr{C})\times U\to L^p(\mathscr{C})_{sa}$ and $\widehat{h}(\cdot):L^p(\mathscr{C}_T)\to L^p(\mathscr{C}_T)_{sa}$, and elements of $L^p(\mathscr{C})_{sa}$ are observables.  And for any $u(\cdot)\in\mathcal{U}(0,T)$,  the cost functional 
\begin{equation*}
\mathcal{J}(u(\cdot))=\mathbb{E}\left(\int_{0}^T\widehat{L}(t,x(t),u(t))dt+\widehat{h}(x(T))\right)
\end{equation*}
was considered by\cite{James.2005,James.2012,S.M}.

Similar to \cite{G.B.S,James.2012,M-M.M}, we consider quantum optimal control problem for \eqref{FSDE}:\\
\textbf{Problem(QOC)}.
Find a control $\bar{u}(\cdot)\in\mathcal{U}(0,T)$ such that
\begin{equation}\label{OP}
\mathcal{J}(\bar{u}(\cdot))=\inf_{u(\cdot)\in\mathcal{U}(0,T)}\mathcal{J}(u(\cdot)).
\end{equation}
Any $\bar{u}(\cdot)\in\mathcal{U}(0,T)$ satisfying \eqref{OP} is called an optimal control. The corresponding $\bar{x}(\cdot)$ and $(\bar{x}(\cdot), \bar{u}(\cdot))$ are called an optimal state process and optimal pair of quantum control systems, respectively.

Optimal control theory is a powerful mathematical tool, which has been rapidly developed since the 1950s, mainly for engineering applications. Recently, this method has become widely used to
improve process performance in quantum technologies by means of the highly efficient control of quantum dynamics \cite{W.M,E.W}.
Pontryagin's maximum principle and Bellman's dynamic programming principle are two of the most important tools for solving optimal control problems.
Belavkin, Smolyanov, James et al. \cite{E.B,B.H.J,B.N.M,C.B,E.W,G.B.S,G.B.S-06,G.B.S-06Bellman,James.2005,M-M.M,G.G.J.N,S.M} tackled the optimal control problem  by quantum stochastic calculus and dynamic programming methods\textit{ in the case that the control domain $U$ is of finite dimension}.
More precisely, Belavkin, Gough and Smolyanov \cite{G.B.S-06,G.B.S-06Bellman} investigated the quantum optimal control problem, which is the evolution of a quantum system subject to continuous measurements
governed by the QSDE
\begin{equation}\label{QSDE-GBS}
  dX_t=\omega(t,u_t,X_t)dt+\sum_{\alpha=1}^m\sigma_\alpha(t,u_t,X_t)dM^\alpha_t,
\end{equation}
where $\{X_t\}$ denotes a stochastic process, $\{M^\alpha; \alpha=1,\cdots, m\}$ denotes a sequence of martingales. 
In this case, the optimal cost is defined by
\begin{equation*}
\mathcal{J}[\{u\};t_0,X_0]=\int_{t_0}^T\textit{l}(s,u_s,X_s)ds+g(X_T),
\end{equation*}
where $\{X_s: s\in [t_0,T]\}$ is the solution to \eqref{QSDE-GBS}  with the initial condition $X_0$, $u$ denotes a continuous control function, taking values in $\mathbb{R}^n$. 
James \cite{James.2005},  Sharifi and Momeni \cite{S.M} investigated the optimal control problems of the following  quantum systems
\begin{equation*}
dX(t)=\omega(t,X(t),u(t))dt+\sigma(t,X(t))dw(t),
\end{equation*}
where the maps $\omega$ and $\sigma$ are Lipschitz continuous with respect to $X(t)$, $w(t)$ is a martingale.
They derived Bellman equation and identified the optimal control $u$ with the cost functional $\mathcal{J}[\{u\};t_0,X_0]$, such that 
$$S(t_0,X_0)=\inf_{u}{\mathcal{J}}[\{u\};t_0,X_0]=\inf_{u\in\mathcal{U}(t)}\mathbb{E}\left\{\int_{t_0}^TC(u(t),X(t))dt+G(u(T),X(T))\right\},$$
where $S(t_0,X_0)$ is the value function, the cost density $C(u(t),X(t))$ and the terminal cost $G(u(T),X(T))$ are observables, and the control domain $\mathcal{U}(t)$ is a subspace of $\mathbb{R}^n$. 
The adaptedness of the control variable $u$ is not considered.
And then,
Mulero-Mart\'{i}nez and Molina-Vilaplana \cite{M-M.M} derived the quantum Pontryagin maximum principle in a global form from the Hamilton-Jacobi-Bellman equation for quantum optimal control with adapted control variable $u$.
Boscain, Sigalotti and Sugny \cite{B.S.S} described modern aspects of optimal control theory, with a particular focus on the Pontryagin maximum principle, 
where the finite-dimensional quantum control system is considered. 
The aforementioned results are about quantum optimal control problems with finite-dimensional control domains.
However, quantum optimal control problems with infinite-dimensional control domain have rarely been investigated.

In classical probability theory, there is a great deal of research on optimal control theory. Since the 1970s, the maximum principle has been extensively studied for stochastic control systems. Peng, Yong, Zhang et al.  obtained corresponding results on the Pontryagin-type maximum principle in \cite{F.Z-2020,H.P,L.Z-2013,L.Z-2014,L.Z-2015,L.Z-2020,D.M,F.H.T,Y.Z}. Peng \cite{P} investigated optimal control problems for control systems when the control enters the diffusion term and the control domain $U$ is nonconvex. L\"{u} and Zhang \cite{L.Z-2013,L.Z-2014,L.Z-2015} derived the necessary conditions for optimal controls under the case of convex and nonconvex control domains, respectively.
Furthermore, Du and Meng \cite{D.M}, Fuhrman, Hu and Tessitore \cite{F.H.T},  L\"{u} and Zhang \cite{L.Z-2014} were concerned with the general stochastic maximum principle of infinite dimensions.

Inspired by the classical infinite-dimensional optimal control theory, this paper investigates the optimal control problem of \textit{infinite-dimensional quantum systems with the infinite-dimensional control domains}.
To obtain the Pontryagin-type maximum principle, the following main ideas play key roles.
\begin{itemize}
  \item 
  For $p\in[1,\infty)$, the noncommutative space $L^p(\mathscr{C})$ can be decomposed into
  \begin{equation*}
   L^p(\mathscr{C})=L^p(\mathscr{C}_e)\oplus L^p(\mathscr{C}_o),
  \end{equation*}
  and the details are listed in \cite[Proposition 3.3]{P.X}. The fermion Brownian motion $\{W(t); t\geq 0\}$ commutes with the elements of $L^p(\mathscr{C}_e)$, and anti-commutes with the elements of $L^p(\mathscr{C}_o)$. This allows us to overcome the difficulty of the noncommutativity of QSDEs in fermion fields.
  \item To address quantum stochastic calculus with respect to the fermion Brownian motion $\{W(t); t\geq 0\}$, the canonical anti-commutation relation \eqref{CAR-Psi} is critical, which implies that
\begin{equation*}
 W(t)^*W(t)=W(t)^2=t \textrm{I}, \quad t\in[0,\infty).
\end{equation*}
  \item The Burkholder-Gundy inequality with respect to noncommutative martingales, given by Pisier and Xu \cite{P.X}, plays a significant role in solving the QSDEs in fermion fields and obtaining the corresponding estimates. Based on this, we prove that the solution to the corresponding backward quantum stochastic differential equation (BQSDE for short) is a necessary condition of \textbf{Problem (QOC)} with an infinite-dimensional control domain $U$. This is the Pontryagin-type maximum principle of quantum control systems.
   \item Finally, we investigate the solution to BQSDEs by using the noncommutative martingales representation theorem and noncommutative martingale inequalities.
\end{itemize}

The paper is organized as follows. In  section \ref{Pre}, we review some basic notations on fermion fields and main lemmas.
Section \ref{Principle} formulates the Pontryagin-type maximum principle for quantum optimal control problems in $L^p(\mathscr{C})$. 
In section \ref{solution-QBSDE}, we prove the existence and uniqueness of the solution to BQSDEs in noncommutative space.


\section{Preliminaries}\label{Pre}
\indent\indent
In this section, we introduce notations based on references \cite{B.S.W.1,B.S.W.2,B.S.W.3,P.X,D.P.B,M.T,W.F} and some lemmata, which will be used later. 

\begin{defn}\cite{B.S.W.1,B.S.W.2,P.X}\label{the definition of the adapted maps}
A map $x:\mathbb{R}^+\rightarrow L^p(\mathscr{C})$ is said to be adapted if $x(t)\in L^p(\mathscr{C}_t)$ for each $t\in\mathbb{R}^+$. A map $F:\mathbb{R}^+\times L^p(\mathscr{C})\rightarrow L^p(\mathscr{C})$ is said to be adapted if $F(t, u)\in L^p(\mathscr{C}_t)$ for any $t\in \mathbb{R}^+$ and $u\in L^p(\mathscr{C}_t)$.
\end{defn}

Throughout this paper, we let $T >0$ be a fixed time horizon and denote
\begin{align*}
 & C_\mathbb{A}(0,T; L^p(\mathscr{C})):=\left\{ f:[0,T]\to L^p(\mathscr{C})\ |\ f(\cdot)\  {\rm is}\ \{\mathscr{C}_t\}_{t\geq0}{\rm -adapted\ and\  continuous}  \right\},
\end{align*}
with the following norm
\begin{equation*}
 \|f(\cdot)\|_{C_\mathbb{A}(0,T; L^p(\mathscr{C}))}:=\sup_{t\in[0,T]}\|f(t)\|_p.
\end{equation*}
It is clear that $C_\mathbb{A}(0,T; L^p(\mathscr{C}))$ is a Banach space.
As usual, for any Banach space $X_1$ and $X_2$, $\mathcal{L}(X_1;X_2)$ is the Banach space of all bounded linear operators from $X_1$ to $X_2$.
For $p_1,p_2\in (1,\infty)$, put
\begin{equation*}
\begin{aligned}
&L^\infty_\mathbb{A}(0,T;\mathcal{L}(L^{p_1}(\mathscr{C});L^{p_2}(\mathscr{C})))\\
&:=\Big\{T:[0,T]\to \mathcal{L}(L^{p_1}(\mathscr{C});L^{p_2}(\mathscr{C})); T(\cdot)\ \textrm{is measurable and essentially bounded}, \\
&\indent\indent\indent\indent\indent\indent\indent\textrm{and}\ T(t)\in \mathcal{L}(L^{p_1}(\mathscr{C}_t);L^{p_2}(\mathscr{C}_t))\ \textrm{a.e. on }[0,T] \Big\}.
\end{aligned}
\end{equation*}
Recall that in \cite{P.X} the grading automorphism $\Upsilon$ on $L^p(\mathscr{C})$ is uniquely determined by
\begin{equation*}
\Upsilon(\Psi(v_1)\Psi(v_2)\cdots \Psi(v_n))=(-1)^n\Psi(v_1)\Psi(v_2)\cdots \Psi(v_n),\quad v_i\in \mathscr{H},\ 1\leq i\leq n.
\end{equation*}
\begin{defn}\cite{P.X,B.S.W.1,B.S.W.2}
An element $f \in L^p(\mathscr{C})$ is said to be even (resp.
odd) if $\Upsilon(f) = f$ (resp. $\Upsilon(f)=-f$).
\end{defn}
\begin{lem}\cite{X.C.B,P.X}\label{BGp and Minkowski}
Let $p\in[1,\infty)$. For $f\in \mathcal{H}^p(0,T)$, its It\^{o}-Clifford integral
$\int_0^tf(s)dW(s)$ and $\int_0^tdW(s)f(s)$
are $L^p$-martingales for $t \in[0,T]$. Moreover,
it holds that
\begin{equation}\label{Burkholder-Gundy inequality}
\left\|\int_0^tdW(s)f(s)\right\|_p\simeq_p\left\|\int_0^tf(s)dW(s)\right\|_p\simeq_p \|f\|_{\mathcal{H}^p(0,t)},\   t\in[0, T].
\end{equation}
If $ p\in (1,2]$, then
\begin{equation}\label{Minkowski of p in (1,2]}
\left(\int_0^t\|f(s)\|_p^2ds\right)^{\frac{1}{2}}\leq\mathcal{C}_p\left\|\int_0^tf(s)dW(s)\right\|_p.
\end{equation}
If $ p\in [2,\infty)$, then
\begin{equation}\label{Minkowski of p in [2,n)}
\left\|\int_0^tf(s)dW(s)\right\|_p\leq \mathcal{C}_p\left(\int_0^t\|f(s)\|_p^2ds\right)^{\frac{1}{2}}.
\end{equation}
\end{lem}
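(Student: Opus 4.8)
Although \eqref{Burkholder-Gundy inequality} and the Minkowski-type bounds are quoted from \cite{X.C.B,P.X}, here is the route I would take. The argument rests on two ingredients: the algebraic identities of the fermion Brownian motion, namely $W(t)^*=W(t)$ and, by \eqref{CAR-Psi}, $W(s)W(t)+W(t)W(s)=2(s\wedge t)I$ (in particular $W(t)^2=tI$), and the noncommutative Burkholder--Gundy inequalities of Pisier and Xu \cite{P.X}. First I would build the It\^o--Clifford integral on adapted simple processes $f=\sum_j a_j\chi_{(t_j,t_{j+1}]}$ with $a_j\in L^p(\mathscr C_{t_j})$, putting $\int_0^t f\,dW=\sum_j a_j\,(W(t_{j+1}\wedge t)-W(t_j\wedge t))$ and symmetrically for $\int_0^t dW\,f$. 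The martingale property is then immediate: for $s\le t$, the conditional expectation $\mathbb E(\,\cdot\mid\mathscr C_s)$ kills $a\,(W(t)-W(s))$ and $(W(t)-W(s))\,a$ for every $a\in\mathscr C_s$ because the increment $W(t)-W(s)$ is centred and $L^2$-orthogonal to $\mathscr C_s$, so adaptedness of $f$ gives $\mathbb E(\int_0^t f\,dW\mid\mathscr C_s)=\int_0^s f\,dW$ and likewise on the left. Expanding $|\int_0^t f\,dW|^2$ and using the trace property of $\mathbb E$, the identity $(W(t_{j+1})-W(t_j))^2=(t_{j+1}-t_j)I$, and the vanishing of cross terms by adaptedness yields the fermionic It\^o isometry $\|\int_0^t f\,dW\|_2^2=\int_0^t\|f(s)\|_2^2\,ds$; this lets me extend the integral to $\mathcal H^2(0,t)$, and then to $\mathcal H^p(0,t)$ via the estimates below.

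For \eqref{Burkholder-Gundy inequality} I would regard $(\int_0^t f\,dW)_{0\le t\le T}$ as a noncommutative $L^p$-martingale and identify its column and row square functions. Since $W$ is self-adjoint and $W(t)^2=tI$, the quadratic variation collapses to a Bochner integral: the column square function of $\int_0^\cdot f\,dW$ is $p$-equivalent to $(\int_0^t|f(s)|^2\,ds)^{1/2}$ and the row square function to $(\int_0^t|f(s)^*|^2\,ds)^{1/2}$, and the decomposition $L^p(\mathscr C)=L^p(\mathscr C_e)\oplus L^p(\mathscr C_o)$ controls the signs generated when $W$ is moved across $f$, so that the left integral produces the same two square functions up to the grading automorphism $\Upsilon$. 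Feeding this into the Pisier--Xu noncommutative Burkholder--Gundy inequality---$\|x\|_p\simeq_p$ the maximum of the two square-function norms for $p\ge2$, and $\simeq_p$ their infimum over decompositions for $1\le p\le2$---gives $\|\int_0^t dW\,f\|_p\simeq_p\|\int_0^t f\,dW\|_p\simeq_p\|f\|_{\mathcal H^p(0,t)}$, because $\mathcal H^p(0,t)$ is precisely the corresponding intersection (resp. sum) space.

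The Minkowski-type bounds follow from \eqref{Burkholder-Gundy inequality} by applying Minkowski's inequality in the commutative time variable at the level of $L^{p/2}(\mathscr C)$. For $p\ge2$ one has $\|f\|_{\mathcal H^p(0,t)}=\max\{\|(\int_0^t|f|^2ds)^{1/2}\|_p,\|(\int_0^t|f^*|^2ds)^{1/2}\|_p\}$, and since $p/2\ge1$ the triangle inequality for Bochner integrals in $L^{p/2}(\mathscr C)$ gives $\|\int_0^t|f(s)|^2ds\|_{p/2}\le\int_0^t\||f(s)|^2\|_{p/2}\,ds=\int_0^t\|f(s)\|_p^2\,ds$, and the same for $|f^*|$; taking square roots and invoking \eqref{Burkholder-Gundy inequality} gives \eqref{Minkowski of p in [2,n)}. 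For $1<p\le2$, $\|f\|_{\mathcal H^p(0,t)}$ is the infimum over decompositions $f=g+h$ of the column norm of $g$ plus the row norm of $h$; for any such decomposition, $(\int_0^t\|f\|_p^2)^{1/2}\le(\int_0^t\|g\|_p^2)^{1/2}+(\int_0^t\|h\|_p^2)^{1/2}$ by the triangle inequality in $L^2([0,t];L^p(\mathscr C))$, and each summand is dominated by the corresponding square-function norm via the reverse Minkowski inequality in $L^{p/2}(\mathscr C)$ (valid since $p/2\le1$ and the integrands $|g(s)|^2$, $|h(s)^*|^2$ are positive); taking the infimum and using \eqref{Burkholder-Gundy inequality} yields \eqref{Minkowski of p in (1,2]}.

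The step I expect to be the real obstacle is the identification in the second paragraph: proving that the abstract column and row square functions of $\int_0^\cdot f\,dW$ coincide, up to $p$-equivalence, with $(\int_0^\cdot|f(s)|^2ds)^{1/2}$ and $(\int_0^\cdot|f(s)^*|^2ds)^{1/2}$, and doing so uniformly for the left and the right integral. This is exactly where the fermionic structure is indispensable: it is the collapse $W(t)^2=tI$ coming from \eqref{CAR-Psi} that turns the quadratic variation into a Bochner integral, and it is the grading $L^p(\mathscr C)=L^p(\mathscr C_e)\oplus L^p(\mathscr C_o)$ that keeps track of the signs so that noncommutativity does not destroy the equivalence. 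Since the lemma is only cited here, in the paper I would simply refer to \cite{X.C.B,P.X} for these computations.
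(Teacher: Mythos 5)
The paper gives no proof of this lemma---it is quoted directly from \cite{X.C.B,P.X}---and your sketch reconstructs exactly the route those sources take: the It\^o--Clifford construction and isometry coming from $W(t)^2=tI$, the Pisier--Xu noncommutative Burkholder--Gundy inequalities identifying $\left\|\int_0^t f\,dW\right\|_p$ with the $\mathcal{H}^p(0,t)$ norm (maximum of row/column square functions for $p\ge 2$, infimum over decompositions for $p\le 2$), and the ordinary resp.\ reverse Minkowski inequality in $L^{p/2}(\mathscr{C})$ for the two scalar bounds. Your use of the grading automorphism to control the sign when $dW$ is moved across $f$ (so that $\Delta W\,x\,\Delta W=\Upsilon(x)\,\Delta t$ and the left and right integrals acquire equivalent square functions) is precisely the point where the fermionic structure enters, so the sketch is correct and consistent with the cited references.
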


\section{The Pontryagin-type Maximum Principle}\label{Principle}
\indent\indent
This section is devoted to obtaining the Pontryagin-type maximum principle for quantum optimal control of \eqref{FSDE} in $L^p(\mathscr{C})$ for $p\in[2,\infty)$.
For the  state  equation \eqref{FSDE} and the cost functional \eqref{Cost functional introduced}, we impose the following assumptions.
\begin{Assump}\label{Assump 1}
\begin{description}
  \item[(A1)] The maps $D(\cdot,\cdot,\cdot), F(\cdot,\cdot,\cdot), G(\cdot,\cdot,\cdot):[0,T]\times L^p(\mathscr{C})\times U\to L^p(\mathscr{C})$ are adapted, and there exists a constant $\mathcal{C}>0$ such that for any $(t,u)\in[0, T]\times U$, $x, \hat{x}\in L^p(\mathscr{C})$,
\begin{equation*}
\left\{
\begin{aligned}
&\|D(t,x,u)-D(t,\hat{x},u)\|_p\leq \mathcal{C}\|x-\hat{x}\|_p,\\
&\|F(t,x,u)-F(t,\hat{x},u)\|_p\leq \mathcal{C}\|x-\hat{x}\|_p,\\
&\|G(t,x,u)-G(t,\hat{x},u)\|_p\leq \mathcal{C}\|x-\hat{x}\|_p,\\
&\|D(t,0,u)\|_p+\|F(t,0,u)\|_p+\|G(t,0,u)\|_p\leq \mathcal{C}.
\end{aligned}
\right.
\end{equation*}
  \item[(A2)] The maps $L:[0,T]\times L^p(\mathscr{C})\times U\to \mathbb{R}$ and $h:L^p(\mathscr{C}_T)\to \mathbb{R}$ are measurable, and there exists a constant $\mathcal{C}>0$ 
   such that for any $(t, u)\in[0, T]\times U$, $x, \hat{x}\in L^p(\mathscr{C})$,
      \begin{equation*}
\left\{
\begin{aligned}
&|L(t,x,u)-L(t,\hat{x},u)|\leq \mathcal{C}\|x-\hat{x}\|_p,\\
&|h(x)-h(\hat{x})|\leq \mathcal{C}\|x-\hat{x}\|_p,\\
&|L(t,0,u)|+|h(0)|\leq \mathcal{C}.
\end{aligned}
\right.
\end{equation*}
  \item[(A3)]The maps $D,\ F,\  G,\ L$ and $h$ are second order Fr\'{e}chet differentiable on $L^p(\mathscr{C})$.
For any $(t,u)\in[0,T]\times U$,  the maps
$D_x(t,\cdot,u),\ F_{x}(t,\cdot,u),\  G_{x}(t,\cdot,u):L^p(\mathscr{C})\to \mathcal{L}(L^p(\mathscr{C}))$
and
$L_x(t,\cdot,u),\ h_x(\cdot): L^p(\mathscr{C})\to L^{p'}(\mathscr{C})$  
 are continuous, the maps $ D_{xx}(t,\cdot,u),\ F_{xx}(t,\cdot,u)$, $G_{xx}(t,\cdot,u): L^p(\mathscr{C})\to \mathcal{L}(L^p(\mathscr{C}),L^p(\mathscr{C});L^p(\mathscr{C}))$
and
 $L_{xx}(t,\cdot ,u),\ h_{xx}(\cdot): L^p(\mathscr{C})\to \mathcal{L}(L^p(\mathscr{C});L^{p'}(\mathscr{C}))$ are continuous. 
 Moreover, for any $(t,x, u)\in [0,T]\times L^p(\mathscr{C})\times U$,
   \begin{equation*}
\left\{
\begin{aligned}
&\|D_{x}(t,x,u)\|_{\mathcal{L}(L^{p}(\mathscr{C}))}+\|F_{x}(t,x,u)\|_{\mathcal{L}(L^{p}(\mathscr{C}))}+\|G_{x}(t,x,u)\|_{\mathcal{L}(L^{p}(\mathscr{C}))}\leq \mathcal{C},\\
&\|D_{xx}(t,x,u)\|_{\mathcal{L}(L^p(\mathscr{C}),L^p(\mathscr{C}); L^{p'}(\mathscr{C}))}+\|F_{xx}(t,x,u)\|_{\mathcal{L}(L^p(\mathscr{C}),L^p(\mathscr{C}); L^{p'}(\mathscr{C}))}\\
&\indent\indent+\|G_{xx}(t,x,u)\|_{\mathcal{L}(L^p(\mathscr{C}),L^p(\mathscr{C}); L^{p'}(\mathscr{C}))}\leq \mathcal{C},\\
&\|L_{x}(t,x,u)\|_{p'}+\|h_{x}(x)\|_{p'}\leq \mathcal{C},\\
&\|L_{xx}(t,x,u)\|_{\mathcal{L}(L^p(\mathscr{C}); L^{p'}(\mathscr{C}))}+\|h_{xx}(x)\|_{\mathcal{L}(L^p(\mathscr{C}); L^{p'}(\mathscr{C}))}\leq \mathcal{C}.
\end{aligned}
\right.
\end{equation*}
\end{description}
\end{Assump}
\noindent In what follows, when there is no confusion, denote $p'$ the conjugate number of $p$, i.e. $\frac{1}{p}+\frac{1}{p'}=1$.

Before stating the main result, we introduce the following auxiliary result.
\begin{lem}\label{the estimate of x}
Under the above conditions, there exists a unique solution $x(\cdot)\in C_\mathbb{A}(0,T;L^p(\mathscr{C}))$ to \eqref{FSDE} for $p\in[2,\infty)$. Moreover,
\begin{equation}\label{estimate of solution w.r.t. intial condition}
  \sup_{t\in[0,T]}\|x(t)\|_p^2\leq \mathcal{C}\left(1+\|x_0\|_p^2\right).
\end{equation}
\end{lem}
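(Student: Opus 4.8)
The plan is to recast \eqref{FSDE} as a fixed-point problem in the Banach space $C_\mathbb{A}(0,T;L^p(\mathscr{C}))$ and then to obtain the a priori bound from the same noncommutative martingale inequalities. Fix $u(\cdot)\in\mathcal{U}(0,T)$; since $u(\cdot)$ is continuous on the compact interval $[0,T]$, we have $M_u:=\sup_{t\in[0,T]}\|u(t)\|_p<\infty$. For $y(\cdot)\in C_\mathbb{A}(0,T;L^p(\mathscr{C}))$ define
\begin{equation*}
(\Phi y)(t):=x_0+\int_0^tD(s,y(s),u(s))\,ds+\int_0^tF(s,y(s),u(s))\,dW(s)+\int_0^tdW(s)\,G(s,y(s),u(s)).
\end{equation*}
First I would check that $\Phi$ maps $C_\mathbb{A}(0,T;L^p(\mathscr{C}))$ into itself. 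Adaptedness of the $ds$-term follows because $D$ is an adapted map and $y(s)\in L^p(\mathscr{C}_s)\subset L^p(\mathscr{C}_t)$ for $s\le t$; the two It\^{o}--Clifford integrals are $L^p$-martingales, hence adapted, by Lemma \ref{BGp and Minkowski}. The integrands are adapted and continuous in $L^p$ (Lipschitz-in-$x$ composed with the continuous $y$ and $u$, plus the boundedness in (A1)), so they lie in the relevant integrand space $\mathcal{H}^p(0,T)$. Continuity in $t$ of the stochastic terms reduces, via $\bigl\|\int_s^tF(r,y(r),u(r))\,dW(r)\bigr\|_p\le\mathcal{C}\bigl(\int_s^t\|F(r,y(r),u(r))\|_p^2\,dr\bigr)^{1/2}\to0$ as $t\to s$ (by \eqref{Minkowski of p in [2,n)}), and similarly for the left integral through the two-sided equivalence in \eqref{Burkholder-Gundy inequality}.

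Next I would run a contraction argument on a short interval. For $y,\hat y\in C_\mathbb{A}(0,\delta;L^p(\mathscr{C}))$, combine the Lipschitz bounds in (A1), Minkowski's inequality for the $ds$-integral, and \eqref{Minkowski of p in [2,n)} together with \eqref{Burkholder-Gundy inequality} for the two stochastic integrals to obtain
\begin{equation*}
\sup_{t\in[0,\delta]}\|(\Phi y)(t)-(\Phi\hat y)(t)\|_p\le \mathcal{C}\bigl(\delta+\sqrt{\delta}\bigr)\sup_{t\in[0,\delta]}\|y(t)-\hat y(t)\|_p.
\end{equation*}
Choosing $\delta>0$ so small that $\mathcal{C}(\delta+\sqrt{\delta})<1$ makes $\Phi$ a contraction on $C_\mathbb{A}(0,\delta;L^p(\mathscr{C}))$, hence there is a unique solution on $[0,\delta]$. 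Since $\delta$ depends only on the constant $\mathcal{C}$ in (A1) and not on the initial datum, I would iterate: solve on $[\delta,2\delta]$ with initial condition $x(\delta)\in L^p(\mathscr{C}_\delta)$, then on $[2\delta,3\delta]$, and so on, patching after finitely many steps to get existence and uniqueness of $x(\cdot)\in C_\mathbb{A}(0,T;L^p(\mathscr{C}))$.

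For the a priori estimate \eqref{estimate of solution w.r.t. intial condition}, I would first turn the Lipschitz and boundedness conditions in (A1) into the linear growth bound $\|D(s,x,u)\|_p\le\mathcal{C}(1+\|x\|_p)$, and likewise for $F$ and $G$. Applying $\|a+b+c+d\|_p^2\le 4(\|a\|_p^2+\|b\|_p^2+\|c\|_p^2+\|d\|_p^2)$ to the integral form of \eqref{FSDE}, bounding the $ds$-term by Cauchy--Schwarz and the two stochastic terms by \eqref{Minkowski of p in [2,n)} and \eqref{Burkholder-Gundy inequality}, one gets for $\phi(t):=\sup_{r\in[0,t]}\|x(r)\|_p^2$ an inequality of the form
\begin{equation*}
\phi(t)\le \mathcal{C}\Bigl(1+\|x_0\|_p^2+\int_0^t\phi(s)\,ds\Bigr),\qquad t\in[0,T],
\end{equation*}
and Gronwall's inequality yields $\sup_{t\in[0,T]}\|x(t)\|_p^2\le \mathcal{C}(1+\|x_0\|_p^2)$ with $\mathcal{C}$ depending only on $T$ and the constants in (A1). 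The main point to handle carefully is the simultaneous presence of the right integral $\int_0^tF\,dW$ and the left integral $\int_0^tdW\,G$, which are genuinely distinct objects because of noncommutativity; this is exactly what the two-sided Burkholder--Gundy equivalence \eqref{Burkholder-Gundy inequality} resolves, making both integrals obey the same $L^p$ bound in terms of the $\mathcal{H}^p$-norm and, for $p\in[2,\infty)$, in terms of $\bigl(\int_0^t\|\cdot\|_p^2\,ds\bigr)^{1/2}$. The only other delicate step, the $C_\mathbb{A}$-continuity of the stochastic integrals, reduces to the same estimates as above.
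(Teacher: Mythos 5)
Your proposal is correct, and for the a priori bound \eqref{estimate of solution w.r.t. intial condition} it is essentially the paper's argument: linear growth from \textbf{(A1)}, the elementary inequality for the sum of four terms, Cauchy--Schwarz on the $ds$-term, the estimate \eqref{Minkowski of p in [2,n)} together with the two-sided equivalence \eqref{Burkholder-Gundy inequality} for the right and left It\^{o}--Clifford integrals, and Gronwall. The one genuine difference is in the existence and uniqueness part: the paper does not prove this at all but simply invokes \cite[Theorem 3.1]{J.W.W}, whereas you supply a self-contained Picard--Banach contraction on $C_\mathbb{A}(0,\delta;L^p(\mathscr{C}))$ with a step size $\delta$ depending only on the Lipschitz constant in \textbf{(A1)}, followed by patching over $[0,T]$. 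Your route costs a little extra work (checking that $\Phi$ preserves adaptedness and $L^p$-continuity, which you do correctly via Lemma \ref{BGp and Minkowski}), but it buys a proof that does not lean on an external well-posedness theorem; the contraction estimate $\mathcal{C}(\delta+\sqrt{\delta})<1$ and the observation that $\delta$ is independent of the initial datum are exactly what is needed to make the iteration close after finitely many steps. I see no gap.
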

\begin{proof}
By \cite[Theorem 3.1]{J.W.W}, there exists a unique solution $x(\cdot)\in C_\mathbb{A}(0,T;L^p(\mathscr{C}))$ to \eqref{FSDE}, and 
$\{x(t)\}_{t\geq 0}$ satisfies the integral equation
\begin{align*}
x(t)=x_0+\int_0^tD(\tau, x(\tau),u(\tau))d\tau+\int_0^tF(\tau, x(\tau),u(\tau))dW(\tau)+\int_0^tdW(\tau)G(\tau, x(\tau),u(\tau)).
\end{align*}
By Assumption \ref{Assump 1} \textbf{(A1)}, we have
\begin{equation}\label{D^e}
\begin{aligned}
  \|D(t, x(t),u(t))\|_p
  \leq&\|D(t, x(t),u(t))-D(t, 0,u(t))\|_p+\|D(t,0,u(t))\|_p\\
  \leq&\mathcal{C}(\|x(t)\|_p+1), \ {\rm a.e.}\ t\in[0,T].
  \end{aligned}
\end{equation}
Similarly, for $t\in[0,T]$,
\begin{equation}\label{F^e, G^e}
 \|F(t, x(t),u(t))\|_p\leq\mathcal{C}(\|x(t)\|_p+1)\quad {\rm and}\quad  \|G(t, x(t),u(t))\|_p\leq\mathcal{C}(\|x(t)\|_p+1).
\end{equation}
Therefore, by the Minkowski inequality, the H\"{o}lder inequality, \eqref{Minkowski of p in [2,n)}, \eqref{D^e} and \eqref{F^e, G^e},
\begin{align*}
 \|x(t)\|_p^2
\leq &  4\|x_0\|_p^2+4T\int_0^t\|D(\tau, x(\tau),u(\tau))\|_p^2d\tau\\
   &+4\mathcal{C}_p\int_0^t\|F(\tau, x(\tau),u(\tau))\|_p^2d\tau+4\mathcal{C}_p\int_0^t\|G(\tau, x(\tau),u(\tau))\|_p^2d\tau\\
   \leq &  4\|x_0\|_p^2+4\mathcal{C}_{p,D,F,G,T}\int_0^t(1+\|x(t)\|_p^2)d\tau.
\end{align*}
By the Gronwall inequality and the Lebesgue Dominated Convergence Theorem, we conclude that
\begin{equation*}
\sup_{t\in[0,T]}\|x(t)\|_p^2\leq \mathcal{C}(\|x_0\|_p^2+1).
\end{equation*}
\end{proof}

Let $(\bar{x}(\cdot), \bar{u}(\cdot))$ be the given optimal pair of $\textbf{Problem (QOC)}$ of  \eqref{FSDE} in $L^p(\mathscr{C})$ with the cost functional
\begin{equation}\label{cost function even}
\mathcal{J}(\bar{u}(\cdot))=\int_0^T L(t,\bar{x}(t),\bar{u}(t))dt+h(\bar{x}(T)).
\end{equation}
Then it satisfies the following quantum control system
\begin{equation}\label{FSDE-of-xe in even}
\left\{
\begin{aligned}
d\bar{x}(t)=&D(t, \bar{x}(t),\bar{u}(t))dt +F(t, \bar{x}(t),\bar{u}(t))dW(t)+dW(t)G(t, \bar{x}(t),\bar{u}(t)),\ \rm{in}\ (0,T],\\
\bar{x}(0)=&x_0.
\end{aligned}
\right.
\end{equation}
Fix any $u(\cdot)\in \mathcal{U}(0,T)$, $\varepsilon>0$, we define
\begin{equation}\label{Definition of u}
u^\varepsilon(t):=\left\{
\begin{aligned}
\bar{u}(t), \quad  &t\in[0,T]\backslash E_\varepsilon,\\
u(t),\quad &t\in E_\varepsilon,
\end{aligned}
\right.
\end{equation}
where $E_\varepsilon\subseteq [0,T]$ is a measurable set with $|E_\varepsilon|=\varepsilon$.
Let $x^\varepsilon(\cdot)$ be the state process of \eqref{FSDE} corresponding to the control variable $u^\varepsilon(\cdot)$, that is,
\begin{equation}\label{FSDE-of-xe,ue-in even}
\left\{
\begin{aligned}
dx^\varepsilon(t)=&D(t, x^\varepsilon(t),u^\varepsilon(t))dt+F(t, x^\varepsilon(t),u^\varepsilon(t))dW(t)+dW(t)G(t, x^\varepsilon(t),u^\varepsilon(t)),\quad {\rm in}\ (0,T],\\
x^\varepsilon(0)=&x_0.
\end{aligned}
\right.
\end{equation}

For the sake of convenience, we denote for $\varphi=D, F, G, L$,
\begin{equation}\label{abbreviation}
\left\{
\begin{aligned}
&\varphi_x(t):=\varphi_x(t, \bar{x}(t),\bar{u}(t)),\\
&\varphi_{xx}(t):=\varphi_{xx}(t, \bar{x}(t),\bar{u}(t)),\\
&\delta\varphi(t):= \varphi(t,\bar{x}(t),u(t))- \varphi(t,\bar{x}(t),\bar{u}(t)),\\
&\delta\varphi_x(t):= \varphi_x(t,\bar{x}(t),u(t))- \varphi_x(t,\bar{x}(t),\bar{u}(t)),\\
&\delta\varphi_{xx}(t):= \varphi_{xx}(t,\bar{x}(t),u(t))- \varphi_{xx}(t,\bar{x}(t),\bar{u}(t)),\\
&\widetilde{\varphi}_x(t):=\int_0^1\varphi_x(t,\bar{x}(t)+\theta(x^\varepsilon(t)-\bar{x}(t)),u^\varepsilon(t))d\theta,\\
&\widetilde{\varphi}_{xx}(t):= 2\int_0^1(1-\theta) \varphi_{xx}(t,\bar{x}(t)+\theta (x^\varepsilon(t)-\bar{x}(t)), u^\varepsilon(t))d\theta.
\end{aligned}
\right.
\end{equation}
Let $y^\varepsilon(\cdot)$ and $z^\varepsilon(\cdot)$ be respectively the solution to the following QSDEs:
\begin{equation}\label{FSDE-y-e}
\left\{
\begin{aligned}
dy^\varepsilon(t)=&D_x(t) y^\varepsilon(t)dt+\left\{F_x(t) y^\varepsilon(t)+\delta F(t)\chi_{E_\varepsilon}(t)\right\}dW(t)\\
 & \quad +dW(t)\left\{G_x(t) y^\varepsilon(t)+\delta G(t)\chi_{E_\varepsilon}(t)\right\},\quad  \rm{in}\ (0,T],\\
y^\varepsilon(0)=&0,
\end{aligned}
\right.
\end{equation}
and
\begin{equation}\label{FSDE-z-e}
\left\{
\begin{aligned}
dz^\varepsilon(t)=
&\left\{F_x(t) z^\varepsilon(t)+\delta F_x(t)y^\varepsilon(t)\chi_{E_\varepsilon}(t)+\frac{1}{2}F_{xx}(t)\left(y^\varepsilon(t),y^\varepsilon(t)\right)\right\}dW(t)\\
 & +dW(t)\left\{G_x(t) z^\varepsilon(t)+\delta G_x(t)y^\varepsilon(t)\chi_{E_\varepsilon}(t)+\frac{1}{2}G_{xx}(t)(y^\varepsilon(t),y^\varepsilon(t))\right\}\\
 &+\left\{D_{x}(t) z^\varepsilon(t)+\delta D(t)\chi_{E_\varepsilon}(t)+\frac{1}{2}D_{xx}(t)(y^\varepsilon(t),y^\varepsilon(t))\right\}dt\quad \rm{in}\ (0,T],\\
z^\varepsilon(0)=&0,
\end{aligned}
\right.
\end{equation}
where the maps $D_{xx}(\cdot),\ F_{xx}(\cdot),\ G_{xx}(\cdot)\in L_\mathbb{A}^\infty(0,T;\mathcal{L}(L^p(\mathscr{C}), L^p(\mathscr{C});L^p(\mathscr{C})))$ are introduced in Assumption \ref{Assump 1}. This means that, for any $(t, x,u)\in [0,T]\times L^p(\mathscr{C})\times U$ and $x_1,x_2\in L^p(\mathscr{C})$, $D_{xx}(t,x,u)(x_1,x_2), F_{xx}(t,x,u)(x_1,x_2), G_{xx}(t,x,u)(x_1,x_2)\in L^p(\mathscr{C})$.
\begin{thm}
\label{all estimate and Taylor of cost functional}
Suppose that  Assumption \ref{Assump 1} holds. Then, for $p\in[2,\infty)$, 
\begin{gather}
 \sup_{t\in[0,T]}\|x^\varepsilon(t)-\bar{x}(t)\|_{p}^{2}=\textbf{o}(\varepsilon);\label{estimate-x-p-e}\\
  \sup_{t\in[0,T]}\|y^\varepsilon(t)\|_{p}^{2}=\textbf{o}(\varepsilon);\label{estimate-y-p-e}\\
  \sup_{t\in[0,T]}\|z^\varepsilon(t)\|_{p}^{2}=\textbf{o}(\varepsilon^{2}); \label{estimate-z-p-e}\\
 \sup_{t\in[0,T]}\|x^\varepsilon(t)-\bar{x}(t)-y^\varepsilon(t)\|_{p}^{2}=\textbf{o}(\varepsilon^{2});\label{estimate-x-y-p-e} \\
  \sup_{t\in[0,T]}\|x^\varepsilon(t)-\bar{x}(t)-y^\varepsilon(t)-z^\varepsilon(t)\|_{p}^{2}=\textbf{o}(\varepsilon^{2}).\label{estimate-x-y-z-p-e}
\end{gather}
Moreover, the following expansion holds for the cost functional:
\begin{equation}\label{cost functional-J-even}
\begin{aligned}
  \mathcal{J}(u^\varepsilon(\cdot))=&\mathcal{J}(\bar{u}(\cdot))+{\rm{Re}}\left\langle h_x(\bar{x}(T),\ y^\varepsilon(T)+z^\varepsilon(T)\right\rangle +\frac{1}{2}{\rm{Re}}\left\langle h_{xx}(\bar{x}(T)y^\varepsilon(T),\ y^\varepsilon(T)\right\rangle\\
  &+{\rm{Re}}\int_0^T\Big\{\left\langle L_x(t), y^\varepsilon(t)+z^\varepsilon(t)\right\rangle+\frac{1}{2}\left\langle L_{xx}(t) y^\varepsilon(t), y^\varepsilon(t)\right\rangle +\delta L(t)\chi_{E_\varepsilon}(t)\Big\}dt+\textbf{o}(\varepsilon).
\end{aligned}
\end{equation}\end{thm}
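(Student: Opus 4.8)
\emph{Proof strategy.}
The plan is first to establish the five a priori estimates \eqref{estimate-x-p-e}--\eqref{estimate-x-y-z-p-e} in the order in which they are listed, each time reducing the estimate for a process to a Gronwall inequality for its $C_\mathbb{A}(0,T;L^p(\mathscr{C}))$-norm obtained from the integral form of the corresponding QSDE, and then to feed these estimates into second-order Taylor expansions of $h$ and $L$ around the optimal pair to obtain \eqref{cost functional-J-even}. For \eqref{estimate-x-p-e} I would subtract \eqref{FSDE-of-xe in even} from \eqref{FSDE-of-xe,ue-in even} and split, for $\varphi\in\{D,F,G\}$, the coefficient difference as $\varphi(t,x^\varepsilon(t),u^\varepsilon(t))-\varphi(t,\bar{x}(t),\bar{u}(t))=\big[\varphi(t,x^\varepsilon(t),u^\varepsilon(t))-\varphi(t,\bar{x}(t),u^\varepsilon(t))\big]+\delta\varphi(t)\chi_{E_\varepsilon}(t)$; the first bracket is $\le\mathcal{C}\|x^\varepsilon(t)-\bar{x}(t)\|_p$ by Assumption \ref{Assump 1}\,\textbf{(A1)}, while $\delta\varphi$ is bounded and the factor $\chi_{E_\varepsilon}$ localizes it to a set of measure $\varepsilon$. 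Applying the Burkholder-Gundy inequality \eqref{Burkholder-Gundy inequality} and \eqref{Minkowski of p in [2,n)} to \emph{both} It\^{o}-Clifford integrals $\int_0^{\cdot}\{\cdots\}\,dW$ and $\int_0^{\cdot}dW\{\cdots\}$, and the H\"{o}lder inequality to the drift, leads to a Gronwall inequality for $\|x^\varepsilon(t)-\bar{x}(t)\|_p^2$ whose inhomogeneous part is small in $\varepsilon$ because of the localization $|E_\varepsilon|=\varepsilon$ together with the continuity of $D,F,G$ and of the controls; Gronwall's inequality, exactly as in the proof of Lemma \ref{the estimate of x}, then gives \eqref{estimate-x-p-e}, and \eqref{estimate-y-p-e} follows identically from the linear QSDE \eqref{FSDE-y-e}, whose only inhomogeneous terms $\delta F(t)\chi_{E_\varepsilon}(t)$, $\delta G(t)\chi_{E_\varepsilon}(t)$ are $\chi_{E_\varepsilon}$-localized.

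For \eqref{estimate-z-p-e} the same scheme applies to \eqref{FSDE-z-e}: the $\chi_{E_\varepsilon}$-carrying terms (such as $\delta F_x(t)y^\varepsilon(t)\chi_{E_\varepsilon}(t)$ and $\delta D(t)\chi_{E_\varepsilon}(t)$) are controlled by combining $|E_\varepsilon|=\varepsilon$ with \eqref{estimate-y-p-e}, while the quadratic terms $\tfrac12\varphi_{xx}(t)(y^\varepsilon(t),y^\varepsilon(t))$ are bounded in $L^p(\mathscr{C})$ by $\mathcal{C}\|y^\varepsilon(t)\|_p^2$ --- since $\varphi_{xx}(t)\in\mathcal{L}\big(L^p(\mathscr{C}),L^p(\mathscr{C});L^p(\mathscr{C})\big)$ with norm $\le\mathcal{C}$ by \textbf{(A3)} --- so their contribution is of the order of $\sup_t\|y^\varepsilon(t)\|_p^2$; together with \eqref{estimate-y-p-e} this yields the claimed estimate \eqref{estimate-z-p-e}. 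For \eqref{estimate-x-y-p-e} and \eqref{estimate-x-y-z-p-e} I would write the exact first- and second-order Taylor formulas for $D,F,G$ in the variable $x$ using the averaged Jacobians $\widetilde{\varphi}_x(t)$ and Hessians $\widetilde{\varphi}_{xx}(t)$ of \eqref{abbreviation}, so that $x^\varepsilon-\bar{x}-y^\varepsilon$ (resp. $x^\varepsilon-\bar{x}-y^\varepsilon-z^\varepsilon$) solves a linear QSDE whose source is, up to bounded coefficients and $\chi_{E_\varepsilon}$-localized corrections, built from $\big(\widetilde{\varphi}_x(t)-\varphi_x(t)\big)y^\varepsilon(t)$ and $\big(\widetilde{\varphi}_{xx}(t)-\varphi_{xx}(t)\big)(y^\varepsilon(t),y^\varepsilon(t))$; since $\|\widetilde{\varphi}_x(t)-\varphi_x(t)\|$ and $\|\widetilde{\varphi}_{xx}(t)-\varphi_{xx}(t)\|$ tend to $0$ as $\varepsilon\to0$ by the continuity assumptions in \textbf{(A3)} and by \eqref{estimate-x-p-e}, the dominated convergence theorem upgrades the resulting Gronwall bounds to the stated $\textbf{o}(\varepsilon^2)$ rates.

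To obtain \eqref{cost functional-J-even}, I would apply the second-order Taylor formula to $h$ at $\bar{x}(T)$ with increment $x^\varepsilon(T)-\bar{x}(T)$ and to $L(t,\cdot,u^\varepsilon(t))$ at $\bar{x}(t)$ with increment $x^\varepsilon(t)-\bar{x}(t)$, adding $\delta L(t)\chi_{E_\varepsilon}(t)$ to account for the control variation inside $L$; then substitute $x^\varepsilon-\bar{x}=y^\varepsilon+z^\varepsilon+\big(x^\varepsilon-\bar{x}-y^\varepsilon-z^\varepsilon\big)$ and replace $L_x(t,\bar{x}(t),u^\varepsilon(t))$, $L_{xx}(t,\bar{x}(t),u^\varepsilon(t))$ by $L_x(t)$, $L_{xx}(t)$. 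Every discarded term --- the cross terms pairing a bounded derivative against $z^\varepsilon$ or against $x^\varepsilon-\bar{x}-y^\varepsilon-z^\varepsilon$, the $\chi_{E_\varepsilon}$-localized derivative mismatches, the remaining quadratic cross terms, and the Taylor remainders $\textbf{o}\big(\|x^\varepsilon(t)-\bar{x}(t)\|_p^2\big)$ --- is $\textbf{o}(\varepsilon)$ after integration in $t$, by the H\"{o}lder (duality) inequality between $L^p(\mathscr{C})$ and $L^{p'}(\mathscr{C})$, the uniform bounds on $h_x,h_{xx},L_x,L_{xx}$ from \textbf{(A3)}, the estimates \eqref{estimate-x-p-e}--\eqref{estimate-x-y-z-p-e}, and the dominated convergence theorem; collecting the surviving terms gives \eqref{cost functional-J-even}.

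I expect the main obstacle to lie in the first part, and specifically in the noncommutativity of the fermion field: since $W(t)$ commutes only with the even part of $L^p(\mathscr{C})$, the left and right It\^{o}-Clifford integrals $\int_0^{\cdot}dW\,f$ and $\int_0^{\cdot}f\,dW$ are genuinely distinct and occur simultaneously in each equation above, so each has to be estimated separately through \eqref{Burkholder-Gundy inequality}--\eqref{Minkowski of p in [2,n)} while keeping track of whether $dW(t)$ multiplies a given operator term from the left or from the right. In addition, passing from the crude $\textbf{O}(\varepsilon^{k})$ bounds to the sharp $\textbf{o}(\varepsilon^{k})$ rates requires, at each stage of the bootstrap, a careful combination of the localization $|E_\varepsilon|=\varepsilon$ with the \emph{continuity} --- and not merely the boundedness --- of the second Fr\'{e}chet derivatives in \textbf{(A3)}, together with a dominated-convergence argument.
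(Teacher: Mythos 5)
Your proposal follows essentially the same route as the paper: the same bootstrap of Gronwall estimates for $\xi^\varepsilon$, $y^\varepsilon$, $z^\varepsilon$, $\eta^\varepsilon=\xi^\varepsilon-y^\varepsilon$ and $\zeta^\varepsilon=\eta^\varepsilon-z^\varepsilon$ via the integral forms of the QSDEs, the Burkholder--Gundy/Minkowski inequalities applied separately to the left and right It\^{o}--Clifford integrals, the averaged Jacobians and Hessians of \eqref{abbreviation} to isolate the $\chi_{E_\varepsilon}$-localized and higher-order sources, and finally a second-order Taylor expansion of $h$ and $L$ in which the discarded terms are controlled by duality and the five estimates. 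The only caveat is one the paper shares: the Gronwall arguments actually deliver $\mathcal{O}(\varepsilon^{k})$ bounds (e.g. $\sup_t\|\xi^\varepsilon(t)\|_p^2\leq\mathcal{C}\varepsilon$), with the continuity of the second Fr\'{e}chet derivatives and dominated convergence entering only where the paper itself invokes them, so your sketch is faithful to the published argument.
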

\begin{proof}
Let $\xi^\varepsilon(t):=x^\varepsilon(t)-\bar{x}(t)$ for  $t\in[0,T]$.
Thus,  $\xi^\varepsilon(\cdot)$ satisfies the following QSDE:
\begin{equation}\label{FSDE-T-1-e}
\left\{
\begin{aligned}
d\xi^\varepsilon(t)=&\left\{\widetilde{D}_x(t)\xi^\varepsilon(t)+\delta D(t)\chi_{E_\varepsilon}(t)\right\}dt
+\left\{\widetilde{F}_x(t)\xi^\varepsilon(t)+\delta F(t)\chi_{E_\varepsilon}(t)\right\}dW(t)\\
&+dW(t)\left\{\widetilde{G}_x(t)\xi^\varepsilon(t)+\delta G(t)\chi_{E_\varepsilon}(t)\right\}, \quad  \textrm{in} \ (0,T],\\
\xi^\varepsilon(0)=&0.
\end{aligned}
\right.
\end{equation}
From  Assumption \ref{Assump 1} \textbf{(A3)}, Lemma \ref{BGp and Minkowski} and the H\"{o}lder inequality, we have 
\begin{equation}\label{the first eatimate of xi}
\begin{aligned}
\|\xi^\varepsilon(t)\|_p^2\leq &\mathcal{C} \int_0^t\|\widetilde{F}_x(\tau)\xi^\varepsilon(\tau)+\delta F(\tau)\chi_{E_\varepsilon}(\tau)\|_p^2+\|\widetilde{G}_x(\tau)\xi^\varepsilon(\tau)+\delta G(\tau)\chi_{E_\varepsilon}(\tau)\|_p^2d\tau\\
&+\mathcal{C}\left(\int_0^t\|\widetilde{D}_x(\tau)\xi^\varepsilon(\tau)\|_p+\|\delta D(\tau)\chi_{E_\varepsilon}(\tau)\|_p(\tau)d\tau\right)^2\\
\leq&\mathcal{C}\int_0^t\|\xi^\varepsilon(\tau)\|_p^2d\tau+\mathcal{C}\left(\int_0^t\|\delta D(\tau)\chi_{E_\varepsilon}(\tau)\|_pd\tau\right)^2\\
&+\mathcal{C}\int_0^t\|\delta F(\tau)\chi_{E_\varepsilon}(\tau)\|_p^2+\|\delta G(\tau)\chi_{E_\varepsilon}(\tau)\|_p^2d\tau.
\end{aligned}
\end{equation}
Under  Assumption \ref{Assump 1} \textbf{(A1)}, it follows from \eqref{estimate of solution w.r.t. intial condition} that, for $t\in[0,T]$
\begin{equation}\label{estimate D of xi}
\begin{aligned}
  \|\delta D(t)\|_p
  \leq& \|D(t,\bar{x}(t), u(t))-D(t,0, u(t))\|_p+\|D(t,0, u(t))-D(t,0, \bar{u}(t))\|_p\\
  &+\|D(t,0, \bar{u}(t))-D(t,\bar{x}(t), \bar{u}(t))\|_p\\
  \leq &\mathcal{C}(\|\bar{x}(t)\|_p+1)\\
  \leq &\mathcal{C}(\|x_0\|_p+1).
  \end{aligned}
\end{equation}
Similarly, we have 
\begin{equation}
\|\delta F(t)\|_p \leq \mathcal{C}(\|x_0\|_p+1),\quad \|\delta G(t)\|_p \leq \mathcal{C}(\|x_0\|_p+1).
\end{equation}
From \eqref{estimate of solution w.r.t. intial condition} and \eqref{estimate D of xi}, we obtain
\begin{equation}\label{estimate of integral with respect of D with s}
\begin{aligned}
 \int_0^t\|\delta D(\tau)\chi_{E_\varepsilon}(\tau)\|_pd\tau\leq &\int_0^t\|\delta D(\tau)\|_p\chi_{E_\varepsilon}(\tau)d\tau\\
  \leq &\mathcal{C}\int_0^t(\|x_0\|_p+1)\chi_{E_\varepsilon}(\tau)d\tau.
\end{aligned}
\end{equation}
Similar to the above reasoning, we have
\begin{equation}\label{estimate F G of xi}
\begin{aligned}
 \int_0^t\|\delta F(\tau)\chi_{E_\varepsilon}(\tau)\|_p^2+\|\delta G(\tau)\chi_{E_\varepsilon}(\tau)\|_p^2d\tau
 &\leq \int_0^t\left\{\|\delta F(\tau)\|_p^2+\|\delta G(\tau)\|_p^2\right\}\chi_{E_\varepsilon}(\tau)d\tau\\
 &\leq \mathcal{C} \int_0^t(\|x_0\|_p^2+1)\chi_{E_\varepsilon}(\tau)d\tau.
\end{aligned}
\end{equation}
By the Gronwall inequality, together with \eqref{the first eatimate of xi}, \eqref{estimate of integral with respect of D with s}, \eqref{estimate F G of xi},  we can infer that
\begin{equation}\label{final estimate of xe-x even}
 \sup_{t\in[0,T]}\|\xi^\varepsilon(t)\|_{p}^{2}
 \leq \mathcal{C}(\|x_0\|_p^2+1)\varepsilon.
\end{equation}
This proves \eqref{estimate-x-p-e}.

Now, we provide estimate on $y^\varepsilon$.
From \eqref{FSDE-y-e}, we obtain that
\begin{align*}
  \|y^\varepsilon(t)\|_p
  \leq &\left\|\int_0^tD_x(\tau)y^\varepsilon(\tau)d\tau\right\|_p+\left\|\int_0^tF_x(\tau)y^\varepsilon(\tau)dW(\tau)\right\|_p+\left\|\int_0^t dW(\tau)G_x(\tau)y^\varepsilon(\tau)\right\|_p\\
  &+\left\|\int_0^t dW(\tau)G_x(\tau)y^\varepsilon(\tau)\right\|_p+\left\|\int_0^tdW(\tau)\delta G(\tau)\chi_{E_\varepsilon}(\tau)\right\|_p\\
  \leq &\mathcal{C}\left(\int_0^t\|y^\varepsilon(\tau)\|_p^2d\tau\right)^{\frac{1}{2}}+\mathcal{C}\left(\int_0^t\left(\|x_0\|^2_p+1\right)\chi_{E_\varepsilon}(\tau)d\tau\right)^{\frac{1}{2}},\quad t\in[0,T].
\end{align*}
By means of the Gronwall inequality, we yield that
\begin{equation}\label{y-x0}
\sup\limits_{t\in[0,T]}\|y^\varepsilon(t)\|_p^2\leq \mathcal{C}(\|x_0\|_p^2+1)\varepsilon.
\end{equation}
Hence, \eqref{estimate-y-p-e} holds.

Next, we prove \eqref{estimate-z-p-e}. From \eqref{FSDE-z-e}, for $t\in[0,T]$,
\begin{align*}
  z^\varepsilon(t)= &\int_0^t\left\{D_x(\tau)z^\varepsilon(\tau)+\delta D(\tau)\chi_{E_\varepsilon}(\tau)+\frac{1}{2}D_{xx}(\tau)(y^\varepsilon(\tau),y^\varepsilon(\tau))\right\}d\tau\\
  &+\int_0^t\left\{F_x(\tau)z^\varepsilon(\tau)+\delta F_x(\tau)y^\varepsilon(\tau)\chi_{E_\varepsilon}(\tau)+\frac{1}{2}F_{xx}(\tau)(y^\varepsilon(\tau),y^\varepsilon(\tau))\right\}dW(\tau)\\
   &+\int_0^tdW(\tau)\left\{G_x(\tau)z^\varepsilon(\tau)+\delta G_x(\tau)y^\varepsilon(\tau)\chi_{E_\varepsilon}(\tau)+\frac{1}{2}G_{xx}(\tau)(y^\varepsilon(\tau),y^\varepsilon(\tau))\right\}.
\end{align*}
Based on  Assumption \ref{Assump 1} $\textbf{(A3)}$, we have
\begin{equation}\label{D-xx of even}
\begin{aligned}
 \left\|\int_0^tD_{xx}(\tau)(y^\varepsilon(\tau),y^\varepsilon(\tau))d\tau\right\|_p&\leq\int_0^t\|D_{xx}(\tau)\|_{\mathcal{L}(L^p(\mathscr{C}),L^p(\mathscr{C});L^p(\mathscr{C}))}\|y^\varepsilon(\tau)\|_p^2d\tau\\
 &\leq\mathcal{C}\int_0^t\|y^\varepsilon(\tau)\|_p^2d\tau.
 \end{aligned}
\end{equation}
By  Lemma \ref{BGp and Minkowski}, similar to \eqref{D-xx of even}, we find that
\begin{equation}\label{bouned linear operator of Fxx}
\begin{aligned}
 \left\|\int_0^tF_{xx}(\tau)(y^\varepsilon(\tau),y^\varepsilon(\tau))dW(\tau)\right\|_p
 \leq&\mathcal{C}_p\left(\int_0^t\|F_{xx}(\tau)(y^\varepsilon(\tau),y^\varepsilon(\tau))\|_p^2d\tau\right)^{\frac{1}{2}}\\
\leq&\mathcal{C}_{p,F}\left(\int_0^t\|y^\varepsilon(\tau)\|_p^4d\tau\right)^{\frac{1}{2}},
\end{aligned}
\end{equation}
and
\begin{equation}\label{G-xx of even}
  \left\|\int_0^tG_{xx}(\tau)(y^\varepsilon(\tau),y^\varepsilon(\tau))dW(\tau)\right\|_p\leq\mathcal{C}_{p,G}\left(\int_0^t\|y^\varepsilon(\tau)\|_p^4d\tau\right)^{\frac{1}{2}}.
\end{equation}
Combined with  Lemma \ref{BGp and Minkowski} and \eqref{estimate D of xi}, one has
\begin{equation}\label{estimate of even Feye delta}
 \begin{aligned}
    \left\|\int_0^t\delta F_x(\tau)y^\varepsilon(\tau)\chi_{E_\varepsilon}(\tau)dW(\tau)\right\|_p^2 
   \leq&\mathcal{C}_p\int_0^t\left\|\delta F_x(\tau)y^\varepsilon(\tau)\right\|_p^2\chi_{E_\varepsilon}(\tau)d\tau\\
   \leq & \mathcal{C}_p\int_0^t\|\delta F_x(\tau)\|^2_{\mathcal{L}(L^p(\mathscr{C}))}\|y^\varepsilon(\tau)\|_p^2\chi_{E_\varepsilon}(\tau)d\tau\\
    \leq & \mathcal{C}\int_0^t(\|x_0\|_p^2+1)\|y^\varepsilon(\tau)\|_p^2\chi_{E_\varepsilon}(\tau)d\tau,
 \end{aligned}
\end{equation}
and
\begin{equation}\label{estimate of even Geye delta}
\begin{aligned}
    \left\|\int_0^tdW(\tau)\delta G_x(\tau)y^\varepsilon(\tau)\chi_{E_\varepsilon}(\tau)\right\|_p^2
   &\leq\mathcal{C}_p\int_0^t\left\|\delta G_x(\tau)y^\varepsilon(\tau)\right\|_p^2\chi_{E_\varepsilon}(\tau)d\tau\\
&\leq\mathcal{C}\int_0^t(\|x_0\|_p^2+1)\|y^\varepsilon(\tau)\|_p^2\chi_{E_\varepsilon}(\tau)d\tau.
\end{aligned}
\end{equation}
Hence, it follows from Lemma \ref{BGp and Minkowski}, \eqref{estimate D of xi}, \eqref{estimate F G of xi}, \eqref{y-x0} and \eqref{D-xx of even}-\eqref{estimate of even Geye delta} that
\begin{align*}
\|z^\varepsilon(t)\|_{p}^{2}\leq&\mathcal{C}\left(\int_0^t\|z^\varepsilon(\tau)\|_{p}^{2} d\tau+\int_0^t\|y^\varepsilon(\tau)\|_{p}^{4} d\tau+\left(\int_0^t(\|x_0\|_p+1)\chi_{E_\varepsilon}(\tau)d\tau\right)^2\right.\\
&\left.+\int_0^t(\|x_0\|_p^2+1)\|y^\varepsilon(\tau)\|_{p}^{2}\chi_{E_\varepsilon}(\tau)d\tau\right)\\
\leq &\mathcal{C}\left(\int_0^t\|z^\varepsilon(\tau)\|_{p}^{2} d\tau+(\|x_0\|_p^2+1)\varepsilon^2\right),
\end{align*}
this, together with the Gronwall inequality, implies that
\begin{equation*}
   \sup_{t\in[0,T]}\|z^\varepsilon(t)\|_{p}^{2}
   \leq \mathcal{C}\left(\|x_0\|^{2}_p+1\right)\varepsilon^{2} .
\end{equation*}

In order to prove \eqref{estimate-x-y-p-e}.
we set $\eta^\varepsilon(t):=\xi^\varepsilon(t)-y^\varepsilon(t)=x^\varepsilon(t)-\bar{x}(t)-y^\varepsilon(t)$, for $t\in[0,T].$
By calculation, $\eta^\varepsilon$ solves the following QSDE:
\begin{equation}\label{FSDE-T-3-e}
\left\{
\begin{aligned}
d\eta^\varepsilon(t)=&\left\{\widetilde{D}_x(t)\eta^\varepsilon(t)+\left(\widetilde{D}_x(t)-D_x(t)\right)y^\varepsilon(t)+\delta D(t)\chi_{E_\varepsilon}(t)\right\}dt\\
&+\left\{\widetilde{F}_x(t)\eta^\varepsilon(t)+\left(\widetilde{F}_x(t)-F_x(t)\right)y^\varepsilon(t)\right\}dW(t)\\
&+dW(t)\left\{\widetilde{G}_x(t)\eta^\varepsilon(t)+\left(\widetilde{G}_x(t)-G_x(t)\right)y^\varepsilon(t)\right\}, \quad  \textrm{in}\ (0,T],\\
\eta^\varepsilon(0)=&0.
\end{aligned}
\right.
\end{equation}
Hence,
\begin{equation}\label{estimate-T-2-p-e}
\begin{aligned}
\|\eta^\varepsilon(t)\|_{p}^2&\leq\mathcal{C} \left\{\int_0^t\|\eta^\varepsilon(\tau)\|_p^2+\left\|\left(\widetilde{F}_x(\tau)-F_x(\tau)\right)y^\varepsilon(\tau)\right\|_{p}^2+\left\|\left(\widetilde{G}_x(\tau)-G_x(\tau)\right)y^\varepsilon(\tau)\right\|_{p}^2d\tau\right.\\
&\indent\left.+\left(\int_0^t\left\|\left(\widetilde{D}_x(\tau)-D_x(\tau)\right)y^\varepsilon(\tau)\right\|_{p}+\|\delta D(\tau)\chi_{E_\varepsilon}(\tau)\|_pd\tau\right)^2\right\}.
\end{aligned}
\end{equation}

The terms on the right side of \eqref{estimate-T-2-p-e} will now be analyzed individually. 
Under Assumption \ref{Assump 1} $\textbf{(A3)}$, we have
\begin{equation}\label{Estimate of Dxx}
\begin{aligned}
   &\left\|\widetilde{D}_x(\tau)-D_x(\tau))\right\|_{\mathcal{L}(L^p(\mathscr{C}))}\\
  =& \left\|\int_0^1\left\{D_x(\tau,\bar{x}(\tau)+\theta\xi^\varepsilon(\tau),u^\varepsilon(\tau))-D_x(\tau)\right\}d\theta\right\|_{\mathcal{L}(L^p(\mathscr{C}))}\\
  =&\bigg\|\int_0^1\left\{D_x(\tau,\bar{x}(\tau)+\theta\xi^\varepsilon(\tau),u^\varepsilon(\tau))-D_x(\tau,\bar{x}(\tau), u^\varepsilon(\tau))\right\}\\
  &\indent\indent+\left\{D_x(\tau,\bar{x}(\tau), u^\varepsilon(\tau))-D_x(\tau)\right\}d\theta\bigg\|_{\mathcal{L}(L^p(\mathscr{C}))}\\
  =&\left\|\int_0^1\left\{\int_0^1D_{xx}(\tau,\bar{x}(\tau)+\sigma\theta\xi^\varepsilon(\tau),u^\varepsilon(\tau))\sigma\xi^\varepsilon(\tau)d\sigma+\chi_{E_\varepsilon}(\tau)\delta D_x(\tau) \right\} d\theta\right\|_{\mathcal{L}(L^p(\mathscr{C}))}\\
  \leq&\mathcal{C}(\|\xi^\varepsilon(\tau)\|_p+\chi_{E_\varepsilon}(\tau)), \ \textrm{a.e.}\ \tau\in[0,T].
\end{aligned}
\end{equation}
It follows from  \eqref{estimate-x-p-e}, \eqref{estimate-y-p-e} and \eqref{Estimate of Dxx} that
\begin{equation}\label{Estimate of Dxx and yex}
\begin{aligned}
 \int_0^t\left\|\left(\widetilde{D}_x(\tau)-D_x(\tau)\right)y^\varepsilon(\tau)\right\|_{p}d\tau
 \leq&  \int_0^t\left\|\widetilde{D}_x(\tau)-D_x(\tau))\right\|_{\mathcal{L}(L^p(\mathscr{C}))}\left\|y^\varepsilon(\tau)\right\|_{p}d\tau\\
\leq&\mathcal{C}\int_0^t(\|\xi^\varepsilon(t)\|_p+\chi_{E_\varepsilon}(\tau))\|y^\varepsilon(t)\|_pd\tau\\
\leq&\mathcal{C}\varepsilon.
 \end{aligned}
\end{equation}
Similar to \eqref{Estimate of Dxx}, we have
\begin{align*}
\left\|\widetilde{F}_x(\tau)-F_x(\tau)\right\|_{\mathcal{L}(L^p(\mathscr{C}))} \leq &\mathcal{C}(\|\xi^\varepsilon(\tau)\|_p+\chi_{E_\varepsilon}(\tau)), \ \textrm{a.e.}\ \tau\in[0,T] ,\\
\left\|\widetilde{G}_x(\tau)-G_x(\tau)\right\|_{\mathcal{L}(L^p(\mathscr{C}))} \leq &\mathcal{C}(\|\xi^\varepsilon(\tau)\|_p+\chi_{E_\varepsilon}(\tau)), \ \textrm{a.e.}\ \tau\in[0,T].
\end{align*}
Hence, similar to \eqref{Estimate of Dxx and yex}, we obtain that
\begin{equation}\label{Estimate of Fxx and yex}
\begin{aligned}
    \int_0^T\left\|\left(\widetilde{F}_x(\tau)-F_x(\tau)\right)y^\varepsilon(\tau)\right\|_{p}^2d\tau \leq&\mathcal{C}\int_0^T \left\|\widetilde{F}_x(\tau)-F_x(\tau)\right\|^2_{\mathcal{L}(L^p(\mathscr{C}))} \|y^\varepsilon(\tau)\|_{p}^2d\tau\\
   \leq&\mathcal{C}\varepsilon^2,
\end{aligned}
\end{equation}
\begin{equation}\label{Estimate of Gxx and yex}
\int_0^T\left\|\left(\widetilde{G}_x(\tau)-G_x(\tau)\right)y^\varepsilon(\tau)\right\|_{p}^2d\tau
   \leq\mathcal{C}\varepsilon^2.
\end{equation}
From \eqref{estimate of integral with respect of D with s} \eqref{estimate-T-2-p-e}, \eqref{Estimate of Dxx and yex}-\eqref{Estimate of Gxx and yex},
we conclude that
\begin{equation}\label{finally-xe-xx-y}
 \sup_{t\in[0,T]}\|\eta^\varepsilon(t)\|_p^{2}\leq \mathcal{C}\varepsilon^2.
\end{equation}

Finally, we prove \eqref{estimate-x-y-z-p-e}.
Let
$$\zeta^\varepsilon(t):=x^\varepsilon(t)-\bar{x}(t)-y^\varepsilon(t)-z^\varepsilon(t)=\eta^\varepsilon(t)-z^\varepsilon(t), \quad t\in[0,T].$$
A direct calculation gives
\begin{equation}\label{equation-T-3-e}
\left\{
\begin{aligned}
d\zeta^\varepsilon(t)=&\left\{D_x(t)\zeta^\varepsilon(t)+\Theta_1(t\right)\}dt+\left\{F_x(t)\zeta^\varepsilon(t)+\Theta_2(t)\right\}dW(t)\\
&+dW(t)\left\{G_x(t)\zeta^\varepsilon(t)+\Theta_3(t)\right\}, \quad  \textrm{in}\ (0,T],\\
\zeta^\varepsilon(0)=&0,
\end{aligned}
\right.
\end{equation}
where
\begin{equation*}
\begin{aligned}
\Theta_1(t):=&\delta D_x(t)\xi^\varepsilon(t)\chi_{E_\varepsilon}(t)+\frac{1}{2}\left\{\widetilde{D}_{xx}(t)-D_{xx}(t,\bar{x}(t), u^\varepsilon(t))\right\}(\xi^\varepsilon(t),\xi^\varepsilon(t))\\
&+\frac{1}{2}D_{xx}(t)\left\{(\xi^\varepsilon(t),\xi^\varepsilon(t))-(y^\varepsilon(t),y^\varepsilon(t))\right\}+\frac{1}{2}\delta D_{xx}(t)(\xi^\varepsilon(t),\xi^\varepsilon(t))\chi_{E_\varepsilon}(t),\\
\Theta_2(t):=&\delta F_x(t)\eta^\varepsilon(t)\chi_{E_\varepsilon}(t)+\frac{1}{2}\left\{\widetilde{F}_{xx}(t)-F_{xx}(t,\bar{x}(t), u^\varepsilon(t))\right\}(\xi^\varepsilon(t),\xi^\varepsilon(t))\\
&+\frac{1}{2}F_{xx}(t)\{(\xi^\varepsilon(t),\xi^\varepsilon(t))-(y^\varepsilon(t),y^\varepsilon(t))\}+\frac{1}{2}\delta F_{xx}(t)(\xi^\varepsilon(t),\xi^\varepsilon(t))\chi_{E_\varepsilon}(t),\\
\Theta_3(t):=&\delta G_x(t)\eta^\varepsilon(t)\chi_{E_\varepsilon}(t)+\frac{1}{2}\left\{\widetilde{G}_{xx}(t)-G_{xx}(t,\bar{x}(t), u^\varepsilon(t))\right\}(\xi^\varepsilon(t),\xi^\varepsilon(t))\\
&+\frac{1}{2}G_{xx}(t)\{(\xi^\varepsilon(t),\xi^\varepsilon(t))-(y^\varepsilon(t),y^\varepsilon(t))\}+\frac{1}{2}\delta G_{xx}(t)(\xi^\varepsilon(t),\xi^\varepsilon(t))\chi_{E_\varepsilon}(t).
\end{aligned}
\end{equation*}
Indeed, from \eqref{FSDE-of-xe in even}, \eqref{FSDE-of-xe,ue-in even}-\eqref{FSDE-z-e}, 
it is easy to verify that the drift term for the equation solved by $\zeta^\varepsilon(\cdot)$ is
\begin{equation}\label{drift term of T-3-e}
\begin{aligned}
&D(t,x^\varepsilon(t),u^\varepsilon(t))-D(t,\bar{x}(t),\bar{u}(t))-D_x(t)y^\varepsilon(t)-D_x(t)z^\varepsilon(t)\\
&-\delta D(t)\chi_{E_\varepsilon}(t)-\frac{1}{2}D_{xx}(t)(y^\varepsilon(t),y^\varepsilon(t))\\
=&D(t,x^\varepsilon(t),u^\varepsilon(t))-D(t,\bar{x}(t),u^\varepsilon(t))-D_x(t)\{y^\varepsilon(t)+z^\varepsilon(t)\}-\frac{1}{2}D_{xx}(t)(y^\varepsilon(t),y^\varepsilon(t)).
\end{aligned}
\end{equation}
For $ \theta\in[0,1]$, 
by Taylor's formula with integral type, we can deduce that
\begin{equation}\label{Taylor of Dxe}
\begin{aligned}
   & D(t,x^\varepsilon(t),u^\varepsilon(t))-D(t,\bar{x}(t),u^\varepsilon(t)) \\
  = &D_x(t,\bar{x}(t),u^\varepsilon(t))\xi^\varepsilon(t)+\int_0^1(1-\theta)D_{xx}(t,\bar{x}(t)+\theta \xi^\varepsilon(t),u^\varepsilon(t))(\xi^\varepsilon(t),\xi^\varepsilon(t))d\theta\\
  =&D_x(t,\bar{x}(t),u^\varepsilon(t))\xi^\varepsilon(t)+\frac{1}{2}\widetilde{D}_{xx}(t)(\xi^\varepsilon(t),\xi^\varepsilon(t)).
\end{aligned}
\end{equation}
Next,
\begin{equation}\label{First Frechet of D}
  \begin{aligned}
  &D_x(t,\bar{x}(t),u^\varepsilon(t))\xi^\varepsilon(t)-D_x(t)(y^\varepsilon(t)+z^\varepsilon(t))\\
  =&\{D_x(t,\bar{x}(t),u^\varepsilon(t))-D_x(t)\}\xi^\varepsilon(t)+D_x(t)\{\xi^\varepsilon(t)-y^\varepsilon(t)-z^\varepsilon(t)\}\\
  =&\delta D_x(t)\xi^\varepsilon(t)\chi_{E_\varepsilon}(t)+D_x(t)\zeta^\varepsilon(t).
\end{aligned}
\end{equation}
Moreover,
\begin{equation}\label{Second Frechet of D}
  \begin{aligned}
  &\frac{1}{2}\widetilde{D}_{xx}(t)(\xi^\varepsilon(t),\xi^\varepsilon(t))-\frac{1}{2}D_{xx}(t)(y^\varepsilon(t),y^\varepsilon(t))\\
  =&\frac{1}{2}\widetilde{D}_{xx}(t)(\xi^\varepsilon(t),\xi^\varepsilon(t))-\frac{1}{2}D_{xx}(t,\bar{x}(t), u^\varepsilon(t))(\xi^\varepsilon(t),\xi^\varepsilon(t))\\
 &+\frac{1}{2}D_{xx}(t,\bar{x}(t), u^\varepsilon(t))(\xi^\varepsilon(t),\xi^\varepsilon(t))-\frac{1}{2}D_{xx}(t,\bar{x}(t), \bar{u}(t))(\xi^\varepsilon(t),\xi^\varepsilon(t))\\
 &+\frac{1}{2}D_{xx}(t)(\xi^\varepsilon(t),\xi^\varepsilon(t))-\frac{1}{2}D_{xx}(t)(y^\varepsilon(t),y^\varepsilon(t))\\
  =&\frac{1}{2}\delta D_{xx}(t)(\xi^\varepsilon(t),\xi^\varepsilon(t))\chi_{E_\varepsilon}(t)+\frac{1}{2}D_{xx}(t)\left\{(\xi^\varepsilon(t),\xi^\varepsilon(t))-(y^\varepsilon(t),y^\varepsilon(t))\right\}\\
  &+\frac{1}{2}\left\{\widetilde{D}_{xx}(t)-D_{xx}(t,\bar{x}(t), u^\varepsilon(t))\right\}(\xi^\varepsilon(t),\xi^\varepsilon(t)).
\end{aligned}
\end{equation}
From \eqref{drift term of T-3-e}-\eqref{Second Frechet of D}, we conclude that
\begin{align*}
   & D(t,x^\varepsilon(t),u^\varepsilon(t))-D(t,\bar{x}(t),\bar{u}(t))-D_x(t)y^\varepsilon(t)-D_x(t)z^\varepsilon(t)\\
   &-\delta D(t)\chi_{E_\varepsilon}(t)-\frac{1}{2}D_{xx}(t)(y^\varepsilon(t),y^\varepsilon(t)) \\
  = &D_x(t)\zeta^\varepsilon(t)+\delta D_x(t)\xi^\varepsilon(t)\chi_{E_\varepsilon}(t)+\frac{1}{2}\left\{\widetilde{D}_{xx}(t)-D_{xx}(t,\bar{x}(t), u^\varepsilon(t))\right\}(\xi^\varepsilon(t),\xi^\varepsilon(t))\\
  &+\frac{1}{2}\delta D_{xx}(t)(\xi^\varepsilon(t),\xi^\varepsilon(t))\chi_{E_\varepsilon}(t)+\frac{1}{2}D_{xx}(t)\left\{(\xi^\varepsilon(t),\xi^\varepsilon(t))-(y^\varepsilon(t),y^\varepsilon(t))\right\}.
\end{align*}
Similarly, the diffusion terms are
\begin{align*}
   & F(t,x^\varepsilon(t),u^\varepsilon(t))-F(t,\bar{x}(t),\bar{u}(t))-F_x(t)y^\varepsilon(t)-F_x(t)z^\varepsilon(t)\\
&-\delta F(t)\chi_{E_\varepsilon}(t)-\delta F_x(t)y^\varepsilon(t)\chi_{E_\varepsilon}(t)-\frac{1}{2}F_{xx}(t)(y^\varepsilon(t),y^\varepsilon(t)) \\
  = &F_x(t)\zeta^\varepsilon(t)+\delta F_x(t)\eta^\varepsilon(t)\chi_{E_\varepsilon}(t)+\frac{1}{2}\left\{\widetilde{F}_{xx}(t)-F_{xx}(t,\bar{x}(t), u^\varepsilon(t))\right\}(\xi^\varepsilon(t),\xi^\varepsilon(t))\\
  &+\frac{1}{2}\delta F_{xx}(t)(\xi^\varepsilon(t),\xi^\varepsilon(t))\chi_{E_\varepsilon}(t)+\frac{1}{2}F_{xx}(t)\left\{(\xi^\varepsilon(t),\xi^\varepsilon(t))-(y^\varepsilon(t),y^\varepsilon(t))\right\},
\end{align*}
and
\begin{align*}
   & G(t,x^\varepsilon(t),u^\varepsilon(t))-G(t,\bar{x}(t),\bar{u}(t))-G_x(t)y^\varepsilon(t)-G_x(t)z^\varepsilon(t)\\
&-\delta G(t)\chi_{E_\varepsilon}(t)-\delta G_x(t)y^\varepsilon(t)\chi_{E_\varepsilon}(t)-\frac{1}{2}G_{xx}(t)(y^\varepsilon(t),y^\varepsilon(t)) \\
  = &G_x(t)\zeta^\varepsilon(t)+\delta G_x(t)\eta^\varepsilon(t)\chi_{E_\varepsilon}(t)+\frac{1}{2}\left\{\widetilde{G}_{xx}(t)-G_{xx}(t,\bar{x}(t), u^\varepsilon(t))\right\}(\xi^\varepsilon(t),\xi^\varepsilon(t))\\
  &+\frac{1}{2}\delta G_{xx}(t)(\xi^\varepsilon(t),\xi^\varepsilon(t))\chi_{E_\varepsilon}(t)+\frac{1}{2}G_{xx}(t)\left\{(\xi^\varepsilon(t),\xi^\varepsilon(t))-(y^\varepsilon(t),y^\varepsilon(t))\right\}.
\end{align*}
From \eqref{equation-T-3-e}, we have 
\begin{equation}\label{estimate-T-3-e}
  \sup_{t\in[0,T]}\|\zeta^\varepsilon(t)\|_{p}^2\leq \mathcal{C}\left(\left(\int_0^T\|\Theta_1(t)\|_{p}dt\right)^2+\int_0^T\left\{\|\Theta_2(t)\|_{p}^2+\|\Theta_3(t)\|_{p}^2\right\}dt\right).
\end{equation}

Next, we consider $\Theta_1(t), \Theta_2(t)$ and $\Theta_3(t)$, respectively. From \eqref{estimate-x-p-e}-\eqref{estimate-x-y-p-e}, we first estimate $\Theta_1(\cdot)$.
By the Minkowski inequality and the H\"{o}lder inequality again,
\begin{equation}\label{estimate-T-3-1-e}
\begin{aligned}
\int_0^T\|\Theta_1(t)\|_{p}dt
&\leq \mathcal{C}\int_0^T\left(\frac{1}{2}\left\|D_{xx}(t)\{(\xi^\varepsilon(t),\xi^\varepsilon(t))-(y^\varepsilon(t),y^\varepsilon(t))\}\right\|_{p}\right.\\
&\indent+\frac{1}{2}\left\|\left\{\widetilde{D}_{xx}(t)-D_{xx}(t,\bar{x}(t),u^\varepsilon(t))\right\}(\xi^\varepsilon(t),\xi^\varepsilon(t))\right\|_{p}\\
&\indent\left.+\chi_{E_\varepsilon}(t)\left\{\|\delta D_x(t)\xi^\varepsilon(t)\|_{p}+\frac{1}{2}\|\delta D_{xx}(t)(\xi^\varepsilon(t),\xi^\varepsilon(t))\|_{p}\right\}\right)dt.
\end{aligned}
\end{equation}
Now, we now estimate each term in the right of \eqref{estimate-T-3-1-e} separately. Based on \eqref{final estimate of xe-x even}, we have the following estimate:
\begin{equation}\label{estimate of drift term-1}
\begin{aligned}
& \int_0^T\chi_{E_\varepsilon}(t)\left\{\left\|\delta D_x(t)\xi^\varepsilon(t)\right\|_{p}+\frac{1}{2}\left\|\delta D_{xx}(t)(\xi^\varepsilon(t),\xi^\varepsilon(t))\right\|_{p}\right\}dt\\
\leq &\int_0^T\chi_{E_\varepsilon}(t)\left\{\|\delta D_x(t)\|_{\mathcal{L}(L^p(\mathscr{C}))}\|\xi^\varepsilon(t)\|_{p}+\|\delta D_{xx}(t)\|_{\mathcal{L}(L^p(\mathscr{C}),L^p(\mathscr{C});L^p(\mathscr{C}))}\|\xi^\varepsilon(t)\|_p^2\right\}dt\\
\leq &\mathcal{C}\int_0^t\chi_{E_\varepsilon}(t)\left\{\sup_{t\in[0,T]}\|\xi^\varepsilon(t)\|_{p}+\sup_{t\in[0,T]}\|\xi^\varepsilon(t)\|_{p}^2\right\}dt\\
\leq & \mathcal{C}\varepsilon^\frac{3}{2}.
 \end{aligned}
\end{equation}
From \eqref{abbreviation}, we can obtain that, for $t\in[0,T]$
\begin{equation}\label{estimate of drift term-2-operator}
\begin{aligned}
&\left\|\widetilde{D}_{xx}(t)-D_{xx}(t,\bar{x}(t),u^\varepsilon(t))\right\|_{\mathcal{L}(L^p(\mathscr{C}),L^p(\mathscr{C});L^p(\mathscr{C}))} \\
=&\left\|2\int_0^1(1-\theta)D_{xx}(t,\bar{x}(t)+\theta \xi^\varepsilon(t),u^\varepsilon(t))d\theta-D_{xx}(t,\bar{x}(t),u^\varepsilon(t))\right\|_{\mathcal{L}(L^p(\mathscr{C}),L^p(\mathscr{C});L^p(\mathscr{C}))}\\
= &\left\|2\int_0^1(1-\theta)\chi_{E_\varepsilon}(t)\{D_{xx}(t,\bar{x}(t)+\theta \xi^\varepsilon(t),u(t))-D_{xx}(t,\bar{x}(t),u(t))\}d\theta\right.\\
&\indent\left.+2\int_0^1(1-\theta)\left\{D_{xx}(t,\bar{x}(t)+\theta \xi^\varepsilon(t),\bar{u}(t))-D_{xx}(t)\right\}d\theta
\right\|_{\mathcal{L}(L^p(\mathscr{C}),L^p(\mathscr{C});L^p(\mathscr{C}))}\\
\leq&\mathcal{C}\left(\int_0^1\left\|D_{xx}(t,\bar{x}(t)+\theta \xi^\varepsilon(t),\bar{u}(t))-D_{xx}(t)\right\|_{\mathcal{L}(L^p(\mathscr{C}),L^p(\mathscr{C});L^p(\mathscr{C}))}d\theta+\chi_{E_\varepsilon}(t)\right).
 \end{aligned}
\end{equation}
Hence, by \eqref{estimate-x-p-e} and  the continuity of $D_{xx}(t,x,u)$ with respect to $x$, we have
\begin{equation}\label{estimate of drift term-2}
\begin{aligned}
&\left(\int_0^T\left\|\left\{\widetilde{D}_{xx}(t)-D_{xx}(t,\bar{x}(t),u^\varepsilon(t))\right\}(\xi^\varepsilon(t),\xi^\varepsilon(t))\right\|_pdt\right)^2\\
\leq&\mathcal{C}\int_0^T\left\|\widetilde{D}_{xx}(t)-D_{xx}(t,\bar{x}(t),u^\varepsilon(t))\right\|^2_{\mathcal{L}(L^p(\mathscr{C}),L^p(\mathscr{C});L^p(\mathscr{C}))}\|\xi^\varepsilon(t)\|_p^4dt\\
\leq&\mathcal{C}\sup_{t\in[0,T]}\|\xi^\varepsilon(t)\|_p^4\int_0^T\bigg\{\chi_{E_\varepsilon}(t)\\
&\left.+\int_0^1\|D_{xx}(t,\bar{x}(t)+\theta \xi^\varepsilon(t),\bar{u}(t))-D_{xx}(t)\|^2_{\mathcal{L}(L^p(\mathscr{C}),L^p(\mathscr{C});L^p(\mathscr{C}))}d\theta\right\}dt\\
\leq&\mathcal{C}\varepsilon^2.
\end{aligned}
\end{equation}
By means of \eqref{final estimate of xe-x even},\eqref{y-x0} and \eqref{finally-xe-xx-y}, and using  the H\"{o}lder inequality again, we obtain
\begin{equation}\label{estimate of drift term-3}
\begin{aligned}
&\left(\int_0^T\|D_{xx}(t)\{(\xi^\varepsilon(t),\xi^\varepsilon(t))-(y^\varepsilon(t),y^\varepsilon(t))\}\|_pdt \right)^2\\
=&\left(\int_0^T\|D_{xx}(t)(\xi^\varepsilon(t),\eta^\varepsilon(t))+D_{xx}(t)(\eta^\varepsilon(t),y^\varepsilon(t))\|_p dt\right)^2\\
\leq&\mathcal{C}\int_0^T\|D_{xx}(t)\|^2_{\mathcal{L}(L^p(\mathscr{C}),L^p(\mathscr{C});L^p(\mathscr{C}))}\{\|\xi^\varepsilon(t)\|_p^2+\|y^\varepsilon(t)\|_p^2\}\|\eta^\varepsilon(t)\|_p^2 dt\\
\leq&\mathcal{C}\left(\sup_{t\in[0,T]}\|\xi^\varepsilon(t)\|_p^2+\sup_{t\in[0,T]}\|y^\varepsilon(t)\|_p^2\right)\sup_{t\in[0,T]}\|\eta^\varepsilon(t)\|_p^2\\
\leq&\mathcal{C}\varepsilon^3.
\end{aligned}
\end{equation}
From \eqref{estimate-T-3-1-e}-\eqref{estimate of drift term-3}, we infer that
\begin{equation}\label{estimate-T-3-1'-even}
\left(\int_0^T\|\Theta_1(t)\|_{p}dt\right)^2\leq\mathcal{C}\varepsilon^2.
\end{equation}

By virtue of \eqref{final estimate of xe-x even} and \eqref{finally-xe-xx-y} again, similar to \eqref{estimate of drift term-1}, we obtain that
\begin{equation}\label{estimate of diffusion term of F-1-even}
\begin{aligned}
 &\int_0^T \chi_{E_\varepsilon}(t)\left\{\|\delta F_x(t)\eta^\varepsilon(t)\|_{p}^2+\|\delta F_{xx}(t)(\xi^\varepsilon(t),\xi^\varepsilon(t))\|_{p}^2\right\}dt\\
 \leq&\mathcal{C}\int_0^T\chi_{E_\varepsilon}(t)\left\{\sup_{t\in[0,T]}\|\eta^\varepsilon(t)\|_p^2+\sup_{t\in[0,T]}\|\xi^\varepsilon(t)\|_p^4\right\}dt\\
 \leq&\mathcal{C}\varepsilon^3.
 \end{aligned}
\end{equation}
Similar to \eqref{estimate of drift term-2}, we find that
\begin{equation}\label{estimate of diffusion term of F-2-even}
\begin{aligned}
 &\int_0^T \left\|\left(\widetilde{F}_{xx}(t)-F_{xx}(t,\bar{x}(t),u^\varepsilon(t))\right)(\xi^\varepsilon(t),\xi^\varepsilon(t))\right\|_{p}^2dt\\
\leq&\int_0^T\left\|\widetilde{F}_{xx}(t)-F_{xx}(t,\bar{x}(t),u^\varepsilon(t))\right\|_{\mathcal{L}(L^p(\mathscr{C}),L^p(\mathscr{C});L^p(\mathscr{C}))}^2\|\xi^\varepsilon(t)\|_p^4dt\\
 \leq&\mathcal{C}\varepsilon^2.
 \end{aligned}
\end{equation}
Similar to \eqref{estimate of drift term-3}, it holds that
\begin{equation}\label{estimate of diffusion term of F-3-even}
\begin{aligned}
 &\int_0^T\|F_{xx}(t)\{(\xi^\varepsilon(t),\xi^\varepsilon(t))-(y^\varepsilon(t),y^\varepsilon(t))\}\|_{p}^2dt\\
 \leq&\int_0^T\|F_{xx}(t)\|^2_{\mathcal{L}(L^p(\mathscr{C}),L^p(\mathscr{C});L^p(\mathscr{C}))}\left\{\|\xi^\varepsilon(t)\|_p^2+\|y^\varepsilon(t)\|_p^2\right\}\|\eta^\varepsilon(t)\|_p^2 dt\\
 \leq &\mathcal{C}\varepsilon^3.
 \end{aligned}
\end{equation}
Thus, from \eqref{estimate of diffusion term of F-1-even}-\eqref{estimate of diffusion term of F-3-even},
we conclude that
\begin{equation}\label{estimate-T-3-2'-even}
\begin{aligned}
\int_0^T\|\Theta_2(t)\|_{p}^2dt  
&\leq\mathcal{C} \int_0^T\Big(\|F_{xx}(t)\{(\xi^\varepsilon(t),\xi^\varepsilon(t))-(y^\varepsilon(t),y^\varepsilon(t))\}\|_{p}^2\\
&\indent\indent +\left\|\left\{\widetilde{F}_{xx}(t)-F_{xx}(t,\bar{x}(t),u^\varepsilon(t))\right\}(\xi^\varepsilon(t),\xi^\varepsilon(t))\right\|_{p}^2\\
&\indent +\chi_{E_\varepsilon}(t)\left\{\|\delta F_x(t)\eta^\varepsilon(t)\|_{p}^2+\|\delta F_{xx}(t)(\xi^\varepsilon(t),\xi^\varepsilon(t))\|_{p}^2\right\}\Big)dt\\
&\leq \mathcal{C} \varepsilon^2.
\end{aligned}
\end{equation}
By a similar arguments, we have
\begin{equation}\label{estimate-T-3-3'-even}
\int_0^T\|\Theta_3(t)\|_{p}^2dt\leq\mathcal{C} \varepsilon^2.
\end{equation}
Substituting \eqref{estimate-T-3-1'-even}, \eqref{estimate-T-3-2'-even}, \eqref{estimate-T-3-3'-even} into \eqref{estimate-T-3-e}, we obtain that
\begin{equation*}
 \sup_{t\in[0,T]}\|\zeta^\varepsilon(t)\|_p^2\leq\mathcal{C}\varepsilon^2.
\end{equation*}
Then \eqref{estimate-x-y-z-p-e} holds.

Finally, we prove \eqref{cost functional-J-even}. By Taylor expansion, we have
\begin{align*}
& \mathcal{J}(u^\varepsilon(\cdot))-\mathcal{J}(\bar{u}(\cdot))  \\
&=h(x^\varepsilon(T))-h(\bar{x}(T))+\int_0^T\left\{L(t,x^\varepsilon(t),u^\varepsilon(t))-L(t,\bar{x}(t),\bar{u}(t))\right\}dt\\
&=\textrm{Re}\int_0^T\left\{\delta L(t)\chi_{E_\varepsilon}(t)+\left\langle L_{x}(t,\bar{x}(t),u^\varepsilon(t)),\xi^\varepsilon(t)\right\rangle\right\} dt\\
&\indent+\textrm{Re}\int_0^T\left\{\int_0^1\left\langle(1-\theta)L_{xx}(t,\bar{x}(t)+\theta\xi^\varepsilon(t),u^\varepsilon(t))\xi^\varepsilon(t),\xi^\varepsilon(t)\right\rangle d\theta \right \}dt\\
&\indent+\textrm{Re}\left\langle h_x(\bar{x}(T)),\xi^\varepsilon(T)\right\rangle+\textrm{Re}\int_0^1\left\langle(1-\theta) h_{xx}(\bar{x}(T)+\theta\xi^\varepsilon(T))\xi^\varepsilon(T),\xi^\varepsilon(T)\right\rangle d\theta.
\end{align*}
This, together with the definitions of $\xi^\varepsilon, y^\varepsilon, z^\varepsilon, \eta^\varepsilon$ and $\zeta^\varepsilon$, yields that
\begin{align*}
& \mathcal{J}(u^\varepsilon(\cdot))-\mathcal{J}(\bar{u}(\cdot))  \\
=&\textrm{Re}\int_0^T\Big\{\delta L(t)\chi_{E_\varepsilon}(t)+\left\langle \delta L_{x}(t),\xi^\varepsilon(t)\right\rangle\chi_{E_\varepsilon}(t)+\left\langle L_{x}(t), y^\varepsilon(t)+z^\varepsilon(t)\right\rangle+\left\langle L_{x}(t),\zeta^\varepsilon(t)\right\rangle\\
&+\int_0^1\left\langle(1-\theta)\left\{L_{xx}(t,\bar{x}(t)+\theta\xi^\varepsilon(t), u^\varepsilon(t))-L_{xx}(t, \bar{x}(t),u^\varepsilon(t))\right\}\xi^\varepsilon(t),\xi^\varepsilon(t)\right\rangle d\theta\\
&\indent+\frac{1}{2}\left\langle\delta L_{xx}(t)\xi^\varepsilon(t),\xi^\varepsilon(t)\right\rangle\chi_{E_\varepsilon}(t)+\frac{1}{2}\left\langle L_{xx}(t)\eta^\varepsilon(t),y^\varepsilon(t)\right\rangle+\frac{1}{2}\left\langle L_{xx}(t)y^\varepsilon(t),y^\varepsilon(t)\right\rangle\\
&\indent+\frac{1}{2}\left\langle L_{xx}(t)\xi^\varepsilon(t),\eta^\varepsilon(t)\right\rangle\Big\}dt+\textrm{Re}\left\langle h_x(\bar{x}(T)),y^\varepsilon(T)+z^\varepsilon(T)\right\rangle\\
&+\textrm{Re}\left\langle h_x(\bar{x}(T)),\zeta^\varepsilon(T)\right\rangle+\frac{1}{2}\textrm{Re}\left\langle h_{xx}(\bar{x}(T))\xi^\varepsilon(T),\eta^\varepsilon(T) \right\rangle\\
&+\frac{1}{2}\textrm{Re}\left\langle h_{xx}(\bar{x}(T))\eta^\varepsilon(T),y^\varepsilon(T) \right\rangle+\frac{1}{2}\textrm{Re}\left\langle h_{xx}(\bar{x}(T))y^\varepsilon(T),y^\varepsilon(T) \right\rangle\\
&+\textrm{Re}\int_0^1\left\langle(1-\theta)\left\{h_{xx}(\bar{x}(T)+\theta\xi^\varepsilon(T))-h_{xx}(\bar{x}(T))\right\}\xi^\varepsilon(T),\xi^\varepsilon(T)\right\rangle d\theta.
\end{align*}
Similar to \eqref{estimate of drift term-2-operator}, for $t\in[0,T]$, we find that
\begin{equation}\label{second order of Taylor expansion of L}
\begin{aligned}
 & \left\|\int_0^1(1-\theta)\left\{L_{xx}(t,\bar{x}(t)+\theta\xi^\varepsilon(t), u^\varepsilon(t))-L_{xx}(t, \bar{x}(t),u^\varepsilon(t))\right\}d\theta\right\|_{\mathcal{L}(L^p(\mathscr{C});L^{p'}(\mathscr{C}))}\\
 &\leq \mathcal{C}\int_0^1\left\|{L_{xx}(t,\bar{x}(t)+\theta\xi^\varepsilon(t), u^\varepsilon(t))-L_{xx}(t)}\right\|_{L(L^p(\mathscr{C});L^{p'}(\mathscr{C}))} d\theta+\chi_{E_\varepsilon}(t).
\end{aligned}
\end{equation}
From \eqref{estimate-x-p-e}-\eqref{estimate-x-y-z-p-e} and \eqref{second order of Taylor expansion of L}, we deduce that
\begin{equation*}
\begin{aligned}
 & \mathcal{ J}(u^\varepsilon(\cdot))-\mathcal{J}(\bar{u}(\cdot))  \\
   =&\textrm{Re}\int_0^T\left\{\delta L(t)\chi_{E_\varepsilon}(t)+\left\langle L_{x}(t), y^\varepsilon(t)+z^\varepsilon(t)\right\rangle+\frac{1}{2}\left\langle L_{xx}(t)y^\varepsilon(t),y^\varepsilon(t)\right\rangle\right\}dt\\
 &+\textrm{Re}\left\langle h_x(\bar{x}(T)),y^\varepsilon(T)+z^\varepsilon(T)\right\rangle+\frac{1}{2}\textrm{Re}\left\langle h_{xx}(\bar{x}(T))y^\varepsilon(T),y^\varepsilon(T) \right\rangle+\textit{\textbf{o}}(\varepsilon).\\
\end{aligned}
\end{equation*}
\end{proof}

In order to establish the necessary conditions for an optimal pair $(\bar{x}(\cdot),\bar{u}(\cdot))$ of $\textbf{Problem (QOC)}$, we need to introduce the following BQSDEs:
\begin{equation}\label{BSDE-p-even}
\left\{
\begin{aligned}
d\phi(t)=&-\left\{D_{x}(t)^*\phi(t)+\left(F^e_x(t)-F^o_x(t)+G_x(t)\right)^*\left(\Phi(t)_e-\Phi(t)_o\right)-L_x(t)\right\}dt\\
&+\Phi(t) dW(t),  \ \rm{in}\ [0,T),\\
\phi(T)=&-h_{x}(\bar{x}(T)),
\end{aligned}
\right.
\end{equation}
and
\begin{equation}\label{BSDE-P-even}
\left\{
\begin{aligned}
dP(t)=&-\left\{D_{x}(t)^*P(t)+P(t) D_x(t)+(F^e_x(t)-F^o_x(t)+G_x(t))^*Q(t)\right.\\
&\indent+\left(F^e_x(t)-F^o_x(t)+G_x(t)\right)^*P(t)\left(F_x(t)+G^e_x(t)-G^o_x(t)\right)\\
&\indent\left.+Q(t)(F^e_x(t)-F^o_x(t)+G_x(t))+\mathbb{H}_{xx}(t,\bar{x}(t),\bar{u}(t),\phi(t),\Phi(t))\right\}dt\\
&+Q(t) dW(t),\ \rm{in}\ [0,T),\\
P(T)=&-h_{xx}(\bar{x}(T)),
\end{aligned}
\right.
\end{equation}
where 
the map $\Phi(\cdot):[0,T]\to \mathcal{H}^{p'}(0,T)$ is adapted,
and $F_x^e(\cdot)-F_x^o(\cdot):=\Upsilon \circ F_x(\cdot)$, $G_x^e(\cdot)-G_x^o(\cdot):=\Upsilon \circ G_x(\cdot)$, the map $Q(\cdot):[0,T]\to\mathcal{L}(L^p(\mathscr{C});L^{p'}(\mathscr{C}))$.
And the Hamiltonian function $\mathbb{H}(\cdot,\cdot,\cdot,\cdot,\cdot)$ is defined by
\begin{equation}\label{Hamiltonian function of even}
\begin{aligned}
&\mathbb{H}(t,x,u,\phi,\Phi)\\
:&=\langle \phi,D(t,x,u)\rangle+\langle \Phi_e-\Phi_{o}, F(t,x,u)_e-F(t,x,u)_o+G(t,x,u)\rangle-L(t,x,u),\\
&\indent\indent (t,x,u,\phi,\Phi)\in [0,T]\times L^p(\mathscr{C})\times U \times L^{p'}(\mathscr{C})\times L^{p'}(\mathscr{C}).
\end{aligned}
\end{equation}
Hereafter we use $\langle\cdot,\cdot\rangle$ to represent \textit{the dual product} of $L^{p'}(\mathscr{C})$ and $L^p(\mathscr{C})$, and $\langle x,y\rangle=m(x^*y)$ for $x\in L^{p'}(\mathscr{C}), y\in L^p(\mathscr{C})$.
\begin{thm}\label{Stochastic Maximum Principle}
Suppose that Assumption \ref{Assump 1} holds. Let $(\bar{x}(\cdot),\bar{u}(\cdot))$ be an optimal pair of \textbf{Problem (QOC)}. Let $(\phi(\cdot), \Phi(\cdot))$ be the solution to  \eqref{BSDE-p-even}, and $(P(\cdot), Q(\cdot))$
be the solution to  \eqref{BSDE-P-even}. Then,
\begin{equation}\label{SMP-inequality-even}
  \begin{aligned}
 & {\rm{Re}}\mathbb{H}(t, \bar{x}(t),\bar{u}(t),\phi(t), \Phi(t))-{\rm{Re}}\mathbb{H}(t, \bar{x}(t),u, \phi(t), \Phi(t))\\
  &-\frac{1}{2}{\rm{Re}}\left\langle (P^e(t)-P^o(t))\{(F(t,\bar{x}(t),\bar{u}(t))+G(t,\bar{x}(t),\bar{u}(t))_e-G(t,\bar{x}(t),\bar{u}(t)))_o\right.\\
  &\indent\indent-(F(t,\bar{x}(t),u)+G(t,\bar{x}(t),u)_e-G(t,\bar{x}(t),u)_o)\}, \\
 &\indent\indent\{(F(t,\bar{x}(t),\bar{u}(t))_e-F(t,\bar{x}(t),\bar{u}(t))_o+G(t,\bar{x}(t),\bar{u}(t)))\\
 &\indent\indent\left.-( F(t,\bar{x}(t),u)_e-F(t,\bar{x}(t),u)_o+G(t,\bar{x}(t),u))\}\right\rangle\geq 0, \ {\rm a.e.}\ t\in[0,T],\  u \in U. 
\end{aligned}
\end{equation}
\end{thm}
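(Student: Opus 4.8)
The plan is to carry out the classical spike-variation argument in the noncommutative setting, feeding the second-order cost expansion \eqref{cost functional-J-even} into a quantum integration-by-parts against the two adjoint pairs $(\phi(\cdot),\Phi(\cdot))$ and $(P(\cdot),Q(\cdot))$, and then localising at a Lebesgue point. \emph{Step 1.} Fix $u\in U$ and a point $t_0\in[0,T)$ that is a Lebesgue point of the integrands occurring below; for small $\varepsilon>0$ set $E_\varepsilon:=[t_0,t_0+\varepsilon]\cap[0,T]$ and let $u^\varepsilon$ be the needle variation \eqref{Definition of u} (one mollifies the two jumps of $u^\varepsilon$ and takes $u\in U\cap L^p(\mathscr C_{t_0})$ so that $u^\varepsilon\in\mathcal U(0,T)$, the mollification being harmless at order $\varepsilon$ by Theorem \ref{all estimate and Taylor of cost functional}). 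Optimality gives $\mathcal J(u^\varepsilon(\cdot))-\mathcal J(\bar u(\cdot))\ge0$, which by \eqref{cost functional-J-even} reads $0\le\mathcal I_1^\varepsilon+\mathcal I_2^\varepsilon+\mathbf o(\varepsilon)$, where $\mathcal I_1^\varepsilon$ is the first-order part (the term $\mathrm{Re}\langle h_x(\bar x(T)),y^\varepsilon(T)+z^\varepsilon(T)\rangle$ together with the $L_x$- and $\delta L$-integrals) and $\mathcal I_2^\varepsilon$ the part quadratic in $y^\varepsilon$ (the term $\tfrac12\mathrm{Re}\langle h_{xx}(\bar x(T))y^\varepsilon(T),y^\varepsilon(T)\rangle$ together with the $L_{xx}$-integral).

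\emph{Step 2.} Apply the quantum It\^o product rule to $t\mapsto m\big(\phi(t)^*(y^\varepsilon(t)+z^\varepsilon(t))\big)$ and to $t\mapsto\langle P(t)y^\varepsilon(t),y^\varepsilon(t)\rangle$, using that $m$ is a trace, that $W(t)^2=t\,\mathrm I$, and that $dW$ commutes with even and anticommutes with odd elements, i.e.\ $dW\,a=(\Upsilon a)\,dW$ — this last fact is exactly what turns $F_x,G_x$ into the twisted coefficients $F_x^e-F_x^o+G_x=\Upsilon\circ F_x+G_x$ and $F_x+G_x^e-G_x^o=F_x+\Upsilon\circ G_x$ of \eqref{BSDE-p-even}--\eqref{BSDE-P-even}. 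Integrating over $[0,T]$ and taking $\mathbb E=m$, the It\^o--Clifford integrals have vanishing expectation by Lemma \ref{BGp and Minkowski}; using $y^\varepsilon(0)=z^\varepsilon(0)=0$, $\phi(T)=-h_x(\bar x(T))$, $P(T)=-h_{xx}(\bar x(T))$, and the fact that the drifts in \eqref{BSDE-p-even}--\eqref{BSDE-P-even} are designed so as to cancel every contribution of $y^\varepsilon$ entering through the regular Fr\'echet derivatives $D_x,F_x,G_x$ (the $D_{xx},F_{xx},G_{xx},L_{xx}$ contributions being assembled, via \eqref{Hamiltonian function of even}, into $\mathbb H_{xx}$), the only surviving pieces, after combining with $\mathcal I_2^\varepsilon$, are: the running term
\begin{equation*}
\mathrm{Re}\int_{E_\varepsilon}\big\{\mathbb H(t,\bar x(t),\bar u(t),\phi(t),\Phi(t))-\mathbb H(t,\bar x(t),u,\phi(t),\Phi(t))\big\}\,dt
\end{equation*}
(here $\langle\phi,\delta D\rangle+\langle\Phi_e-\Phi_o,(\delta F)_e-(\delta F)_o+\delta G\rangle-\delta L$ reassembles into exactly $\mathbb H(\bar u)-\mathbb H(u)$), together with the second-order needle term coming from the quadratic variation of the jump part $\delta F\chi_{E_\varepsilon}\,dW+dW\,\delta G\chi_{E_\varepsilon}$ of $dy^\varepsilon$: by $dW\,dW=dt\,\mathrm I$ and the grading this quadratic variation equals $(\delta F+\delta G_e-\delta G_o)(\delta F_e-\delta F_o+\delta G)\,dt$ on $E_\varepsilon$, and pairing it through $P$ brings out the graded operator $P^e-P^o$ appearing in \eqref{SMP-inequality-even}, producing $-\tfrac12\mathrm{Re}\int_{E_\varepsilon}\langle(P^e-P^o)\kappa(t),\lambda(t)\rangle\,dt$ with $\kappa,\lambda$ the two bracketed differences of \eqref{SMP-inequality-even}. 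All cross terms between $\delta F,\delta G$ and $F_xy^\varepsilon,G_xy^\varepsilon$, and all $z^\varepsilon$-contributions, are $\mathbf O(\varepsilon^{3/2})$ or smaller, hence $\mathbf o(\varepsilon)$, by \eqref{estimate-y-p-e}--\eqref{estimate-z-p-e}.

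\emph{Step 3.} Collecting Steps 1--2,
\begin{equation*}
0\le\mathrm{Re}\int_{E_\varepsilon}\Big\{\mathbb H(t,\bar x(t),\bar u(t),\phi(t),\Phi(t))-\mathbb H(t,\bar x(t),u,\phi(t),\Phi(t))-\tfrac12\langle(P^e(t)-P^o(t))\kappa(t),\lambda(t)\rangle\Big\}\,dt+\mathbf o(\varepsilon).
\end{equation*}
Dividing by $\varepsilon=|E_\varepsilon|$ and letting $\varepsilon\downarrow0$, the Lebesgue differentiation theorem yields \eqref{SMP-inequality-even} at $t_0$; since almost every $t_0\in[0,T]$ qualifies and $u\in U$ was arbitrary, the theorem follows.

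\emph{The hard part.} The technical core is the quantum It\^o / integration-by-parts formula in the fermion field when a \emph{left} integral $\int dW\,G$ and a \emph{right} integral $\int F\,dW$ occur simultaneously: the quadratic covariations must be computed by commuting $dW$ past operator-valued coefficients, which flips signs on odd components — this is the source of every $\Upsilon$-twist in \eqref{BSDE-p-even}--\eqref{BSDE-P-even} and of the appearance of $P^e-P^o$ rather than $P$ in \eqref{SMP-inequality-even}. A further point of care is that $\phi,P$ take values in $L^{p'}(\mathscr C)$ (resp.\ in operators into $L^{p'}(\mathscr C)$) while $y^\varepsilon,z^\varepsilon\in L^p(\mathscr C)$, so the It\^o expansion of the dual pairings and the vanishing of the martingale parts under $m$ must be justified through the $L^{p'}$--$L^p$ duality, presumably by approximating $\phi,P$ by bounded processes; the well-posedness of the BQSDEs \eqref{BSDE-p-even}--\eqref{BSDE-P-even} themselves is supplied in Section \ref{solution-QBSDE}. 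The remaining items — bounding the $\mathbf o(\varepsilon)$ remainders and the Lebesgue-point limit — are routine once Theorem \ref{all estimate and Taylor of cost functional} is in hand.
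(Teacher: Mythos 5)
Your proposal follows essentially the same route as the paper: it feeds the second-order cost expansion of Theorem \ref{all estimate and Taylor of cost functional} into the dual pairings of $(y^\varepsilon+z^\varepsilon)$ with $\phi$ and of $y^\varepsilon$ with $P$, lets the BQSDE drifts cancel the first- and second-order state contributions so that only the needle terms $\delta\mathbb H$ and the quadratic $(P^e-P^o)$-term survive on $E_\varepsilon$, and then divides by $\varepsilon$ and localises. The only differences are presentational — you are somewhat more explicit than the paper about the Lebesgue-point localisation and about keeping $u^\varepsilon$ admissible — so the argument matches the paper's proof.
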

\begin{proof}
In view of the dual relation between \eqref{FSDE-y-e} and \eqref{BSDE-p-even}, we have
\begin{equation}\label{Ito formula-p-y-even}
\begin{aligned}
&\langle \phi(T), y^\varepsilon(T)\rangle= - \left\langle h_x(\bar{x}(T)), y^\varepsilon(T)\right\rangle\\
=&-\int_0^T\Big\{\left\langle D_x(t)^*\phi(t), y^\varepsilon(t)\right\rangle+\left\langle \Phi(t)_e-\Phi(t)_o, (F_x^e(t)-F_x^o(t)+G_x(t))y^\varepsilon(t)\right\rangle\Big\}dt\\
&+\int_0^T\Big\{\langle\Phi(t)_e-\Phi(t)_o, \delta(F(t)_e-F(t)_o+G(t))\rangle\chi_{E_\varepsilon}(t)+\langle\phi(t), D_x(t)y^\varepsilon(t) \rangle\Big\}dt\\
 &+\int_0^T\left\{\langle\Phi(t)_e-\Phi(t)_o, (F_x^e(t)-F_x^o(t)+G_x(t))y^\varepsilon(t)\rangle+\left\langle L_x(t), y^\varepsilon(t)\right\rangle \right\}dt\\
=&\int_0^T\Big\{\left\langle L_x(t), y^\varepsilon(t)\right\rangle+\langle \Phi(t)_e-\Phi(t)_o, \delta(F(t)_e-F(t)_o+G(t))\rangle\chi_{E_\varepsilon}(t)\Big\}dt,
\end{aligned}
\end{equation}
Similarly,
\begin{equation}\label{Ito formula-p-z-even}
\begin{aligned}
& \langle \phi(T), z^\varepsilon(T)\rangle= - \left\langle h_x(\bar{x}(T)),z^\varepsilon(T)\right\rangle\\
=&\int_0^T\Big\{\langle \phi(t), \delta D(t)\rangle+\langle \Phi(t)_e-\Phi(t)_o, \delta(F_x^e(t)-F_x^o(t)+G_x(t))y^\varepsilon(t)\rangle\Big\}\chi_{E_\varepsilon}(t) dt\\
& +\int_0^T  \left\langle L_x(t), z^\varepsilon(t)\right\rangle+\frac{1}{2}\Big\{\langle \phi(t),D_{xx}(t)(y^\varepsilon(t),y^\varepsilon(t))\rangle\\
& \indent\indent\indent\indent+\langle \Phi(t)_e-\Phi(t)_o, (F^e_{xx}(t)-F^o_{xx}(t)+G_{xx}(t))(y^\varepsilon(t),y^\varepsilon(t))\rangle\Big\} dt,
\end{aligned}
\end{equation}
where $F^e_{xx}(t)-F^o_{xx}(t)=\Upsilon\circ F_{xx}(t)$.
From \eqref{estimate-y-p-e}, \eqref{Ito formula-p-y-even} and \eqref{Ito formula-p-z-even}, we obtain that
\begin{equation}\label{Ito formula-p-y+z}
\begin{aligned}
     &\left\langle \phi(T), y^\varepsilon(T)+z^\varepsilon(T)\right\rangle= - \left\langle h_x(\bar{x}(T)), y^\varepsilon(T)+z^\varepsilon(T)\right\rangle\\
=&\int_0^T\Big\{
\langle \phi(t), \delta D(t)\rangle
+\langle \Phi(t)_e-\Phi(t)_o, \delta(F(t)_e-F(t)_o+G(t))\rangle\Big\}\chi_{E_\varepsilon}(t)dt\\
&+\int_0^T \frac{1}{2}\Big\{\langle \Phi(t)_e-\Phi(t)_o, (F^e_{xx}(t)-F^o_{xx}(t)+G_{xx}(t))(y^\varepsilon(t),y^\varepsilon(t))\rangle \\
&\indent\indent+\langle \phi(t),D_{xx}(t)(y^\varepsilon(t),y^\varepsilon(t)) \rangle\Big\}+\left\langle L_x(t), y^\varepsilon(t)+z^\varepsilon(t)\right\rangle dt+\textit{\textbf{o}}(\varepsilon^{\frac{3}{2}}).
\end{aligned}
\end{equation}
By \cite[Chapter 12.4]{L.Z-2020},  from \eqref{FSDE-y-e} and \eqref{BSDE-P-even}, we have the following dual product
\begin{equation}\label{the dual product of Py and y}
\begin{aligned}
 &\langle P(T)y^\varepsilon(T),y^\varepsilon(T) \rangle =-\left\langle h_{xx}(\bar{x}(T))y^\varepsilon(T), y^\varepsilon(T)\right\rangle \\
 =&\int_0^T\langle (P^e(t)-P^o(t))\delta(F(t)+G(t)_e-G(t)_o), \delta(F(t)_e-F(t)_o+G_x(t))\rangle\chi_{E_\varepsilon}(t)dt\\
&-2\int_0^T\langle Q^e(t)y^\varepsilon(t)_o+Q^o(t)y^\varepsilon(t)_e, (F_x^e(t)-F_x^o(t)+G_x(t))y^\varepsilon(t)\rangle dt\\
&-2\int_0^T\langle P^o(t)(F_x(t)+G_x^e(t)-G^o_x(t))y^\varepsilon(t), (F_x^e(t)-F_x^o(t)+G_x(t))y^\varepsilon(t)\rangle dt\\
&-\int_0^T\langle\mathbb{H}_{xx}(t,\bar{x}(t),\bar{u}(t),\phi(t),\Phi(t)) y^\varepsilon(t),\ y^\varepsilon(t)\rangle dt+\textit{\textbf{o}}(\varepsilon^{\frac{3}{2}}).
\end{aligned}
\end{equation}
Substituting \eqref{the dual product of Py and y} into \eqref{cost functional-J-even}, and combining with \eqref{estimate-y-p-e} and \eqref{estimate-z-p-e}, we have
\begin{equation}\label{Finally-estiamte-even}
  \begin{aligned}
    0\leq& \mathcal{J}(u^\varepsilon(\cdot))-\mathcal{J}(\bar{u}(\cdot))  \\
     =&-\textrm{Re}\int_0^T\chi_{E_\varepsilon}(t) \Big\{-\delta L(t)\langle \phi(t), \delta D(t)\rangle+\langle \Phi(t)_e-\Phi(t)_o, \delta(F(t)_e-F(t)_o+G(t))\rangle\\
     &-\frac{1}{2}\langle (P^e(t)-P^o(t))\delta(F(t)+G(t)_e-G(t)_o), \delta(F(t)_e-F(t)_o+G(t))\rangle\Big\}dt+\textit{\textbf{o}}(\varepsilon).
   \end{aligned}
\end{equation}
Therefore, we easily obtain
\begin{equation*}
\begin{aligned}
  &\textrm{Re}\int_0^T\Bigg\{\frac{1}{2}\left\langle(P^e(t)-P^o(t))\delta(F(t)+G(t)_e-G(t)_o), \delta(F(t)_e-F(t)_o+G(t))\right\rangle+\delta \mathbb{H}(t)\Bigg\}\chi_{E_\varepsilon}(t)dt\\
  &\leq\textit{\textbf{o}}(\varepsilon),
  \end{aligned}
\end{equation*}
where $$\delta \mathbb{H}(t):=\mathbb{H}(t, \bar{x}(t),u(t), \phi(t), \Phi(t))-\mathbb{H}(t, \bar{x}(t),\bar{u}(t),\phi(t), \Phi(t)).$$
Let $\varepsilon\to 0$, for a.e. $t\in[0,T]$, we have
\begin{equation*}
\begin{aligned}
 &\textrm{Re} \mathbb{H}(t, \bar{x}(t),u(t),\phi(t), \Phi(t))-\textrm{Re}\mathbb{H}(t, \bar{x}(t),\bar{u}(t), \phi(t), \Phi(t))\\
  &+\frac{1}{2}\textrm{Re}\left\langle(P^e(t)-P^o(t))\delta(F(t)+G(t)_e-G(t)_o), \delta(F(t)_e-F(t)_o+G(t))\right\rangle\leq 0.\\
\end{aligned}
\end{equation*}
Thus \eqref{SMP-inequality-even} is proved.
\end{proof}
\begin{rem}
In \eqref{SMP-inequality-even}, 
\begin{equation}\label{noncommute}
\langle(P^e(t)-P^o(t))\delta(F(t)+G(t)_e-G(t)_o), \delta(F(t)_e-F(t)_o+G(t))\rangle
\end{equation}
indicates that the operators do not commute with each other.
If the maps $F(\cdot,\cdot,\cdot)$, $G(\cdot,\cdot,\cdot):[0,T]\times L^p(\mathscr{C})\times U\to L^p(\mathscr{C}_e)$ are adapted, then $F(\cdot,\cdot,\cdot),G(\cdot,\cdot,\cdot)$ can commute  with $dW(t)$, and \eqref{noncommute} can be transformed into
\begin{equation*}
\langle(P^e(t)-P^o(t))\delta(F(t)+G(t)), \delta(F(t)+G(t))\rangle.
\end{equation*}
Then, Theorem \ref{Stochastic Maximum Principle} is similar to the classical results \cite[Theorem 2.6]{D.M} and \cite[Theorem 12.17]{L.Z-2020}.
\end{rem}
\section{The Solutions to Backward Quantum Stochastic Differential Equations}\label{solution-QBSDE}
In section \ref{Principle}, we present a necessary condition for quantum optimal control of \eqref{FSDE}. That is,
the solutions to BQSDEs is necessary for the Pontryagin-type Maximum Principle. In this section, we investigate the solution to \eqref{BSDE-p-even} in $L^{p'}(\mathscr{C})$ for $p'\in(1,2]$.
Let us introduce the following semilinear BQSDE
\begin{equation}\label{BQSDE}
\left\{
\begin{aligned}
  dy(t)&=f(t,y(t),Y(t))dt+Y(t)dW(t),\ {\rm in}\ [0,T),\\
 y(T)&=y_T,
\end{aligned}
\right.
\end{equation}
where $y_T\in L^{p'}(\mathscr{C}_T)$ is given, the map $f(\cdot,\cdot,\cdot):[0,T]\times L^{p'}(\mathscr{C})\times L^{p'}(\mathscr{C})\to  
L^{p'}(\mathscr{C})$ is adapted, $Y(\cdot)\in\mathcal{H}^{p'}(0,T)$.
To obtain the solution to \eqref{BQSDE}, we assume the following condition:
\begin{Assump}
 \label{Assump 3}
    The map $f(\cdot, y(\cdot), Y(\cdot))$ is adapted for each $y(t)\in L^{p'}(\mathscr{C}_t)$ and $Y(t)\in L^{p'}(\mathscr{C}_t)$, 
$f(\cdot,0,0)\in L^{p'}_\mathbb{A}(\mathscr{C};L^1(0,T))$, and there exist nonnegative functions $g_1(\cdot)\in L^1(0,T)$ and $g_2(\cdot)\in L^2(0,T)$ such that
\begin{align*}
  \|f(t,y_1,Y_1)-f(t,y_2,Y_2)\|_{p'}&\leq g_1(t)\|y_1-y_2\|_{p'}+g_2(t)\|Y_1-Y_2\|_{p'} ,\\
  \ \forall y_1,\ y_2\in L^{p'}(\mathscr{C}_t),&\  Y_1,\ Y_2\in  L^{p'}(\mathscr{C}_t).
\end{align*}
\end{Assump}
\begin{defn}
A pair $(y(\cdot),Y(\cdot))\in C_{\mathbb{A}}(0,T; L^{p'}(\mathscr{C}))\times \mathcal{H}^{p'}(0,T)$ is called a solution to \eqref{BQSDE} if $y(\cdot)$ is an  $L^{p'}(\mathscr{C})$-valued adapted, continuous process, $Y(\cdot)\in \mathcal{H}^{p'}(0,T)$, $f(\cdot, y(\cdot), Y(\cdot))
\in L^{p'}_\mathbb{A}(\mathscr{C};L^1(0,T))$ {\rm a.s.}, and for any $t\in[0, T]$,
\begin{equation}\label{def of QBSDE}
  y(t)=y_T-\int_t^Tf(\tau,y(\tau),Y(\tau))d\tau-\int_t^TY(\tau)dW(\tau).
\end{equation}
\end{defn}
\begin{thm}\label{the solution to QBSDEs}
Suppose that Assumption \ref{Assump 3} holds. Then, there exists a unique pair $(y(\cdot), Y(\cdot))\in C_\mathbb{A}(0,T; L^{p'}(\mathscr{C}))\times \mathcal{H}^{p'}(0,T)$ which satisfies \eqref{def of QBSDE}. Furthermore, 
\begin{equation}\label{Estimate-back}
 \|(y(\cdot),Y(\cdot))\|_{C_{\mathbb{A}}(0,T; L^{p'}(\mathscr{C}))\times \mathcal{H}^{p'}(0,T)}\leq \mathcal{C}\left(\|y_T\|_{p'}+\int_0^T\|f(\tau,0,0)\|_{p'}d\tau\right).
\end{equation}
\end{thm}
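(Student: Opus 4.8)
The plan is to reduce the semilinear equation \eqref{BQSDE} to its affine version (generator independent of the unknowns), to solve the affine equation by the noncommutative martingale representation theorem, and then to recover the general case by a fixed point argument; the a priori bound \eqref{Estimate-back} will come out of the same estimates.

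\textbf{Step 1 (the affine equation).}
Fix $\psi(\cdot)\in L^{p'}_\mathbb{A}(\mathscr{C};L^1(0,T))$ and seek $(y,Y)\in C_\mathbb{A}(0,T;L^{p'}(\mathscr{C}))\times\mathcal{H}^{p'}(0,T)$ with $y(t)=y_T-\int_t^T\psi(\tau)\,d\tau-\int_t^TY(\tau)\,dW(\tau)$ for every $t\in[0,T]$. Set $\zeta:=y_T-\int_0^T\psi(\tau)\,d\tau\in L^{p'}(\mathscr{C}_T)$ and let $N(\cdot)$, $N(t):=\mathbb{E}(\zeta\mid\mathscr{C}_t)$, be the associated $L^{p'}$-martingale. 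By the noncommutative martingale representation theorem (for the fermion filtration $\{\mathscr{C}_t\}_{t\geq 0}$) there is a unique $Y(\cdot)\in\mathcal{H}^{p'}(0,T)$ with $N(t)=\mathbb{E}(\zeta)+\int_0^tY(\tau)\,dW(\tau)$; moreover $t\mapsto N(t)$ is $L^{p'}$-continuous because $\|N(t)-N(s)\|_{p'}\simeq_{p'}\|Y\chi_{(s,t]}\|_{\mathcal{H}^{p'}(0,T)}$ by \eqref{Burkholder-Gundy inequality}. Putting $y(t):=N(t)+\int_0^t\psi(\tau)\,d\tau$, one checks $y(T)=y_T$, and rewriting $N(t)=N(T)-\int_t^TY\,dW$ shows $(y,Y)$ solves the displayed equation; $y(\cdot)$ is adapted and continuous. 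Using \eqref{Burkholder-Gundy inequality} and the $L^{p'}$-contractivity of the conditional expectations $\mathbb{E}(\cdot\mid\mathscr{C}_t)$ (so that $\|N(t)\|_{p'}\le\|\zeta\|_{p'}$), one gets
\begin{equation*}
\|(y,Y)\|_{C_\mathbb{A}(0,T;L^{p'}(\mathscr{C}))\times\mathcal{H}^{p'}(0,T)}\le\mathcal{C}\Big(\|y_T\|_{p'}+\int_0^T\|\psi(\tau)\|_{p'}\,d\tau\Big),
\end{equation*}
and uniqueness of $(y,Y)$ is immediate from uniqueness of the representation.

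\textbf{Step 2 (fixed point).}
On the Banach space $\mathscr{X}:=C_\mathbb{A}(0,T;L^{p'}(\mathscr{C}))\times\mathcal{H}^{p'}(0,T)$, define $\Gamma(\hat y,\hat Y):=(y,Y)$ to be the Step~1 solution with $\psi(t):=f(t,\hat y(t),\hat Y(t))$; Assumption \ref{Assump 3} ensures $\psi\in L^{p'}_\mathbb{A}(\mathscr{C};L^1(0,T))$, so $\Gamma$ maps $\mathscr{X}$ into itself. For two inputs the difference of generators satisfies $\|\delta\psi(t)\|_{p'}\le g_1(t)\|\delta\hat y(t)\|_{p'}+g_2(t)\|\delta\hat Y(t)\|_{p'}$; applying the Step~1 estimate to the difference (which has zero terminal datum) on a subinterval $[s,T]$, together with H\"older's inequality in time and the Minkowski-type bound \eqref{Minkowski of p in (1,2]} (valid since $p'\in(1,2]$) to pass from $\int_s^Tg_2\|\delta\hat Y\|_{p'}$ to $(\int_s^Tg_2^2)^{1/2}\|\delta\hat Y\|_{\mathcal{H}^{p'}(s,T)}$, gives for $\Gamma$ restricted to $[s,T]$ a Lipschitz constant of the form $\mathcal{C}(\int_s^Tg_1+(\int_s^Tg_2^2)^{1/2})$. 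Since $g_1\in L^1(0,T)$ and $g_2\in L^2(0,T)$, the maps $s\mapsto\int_s^Tg_1$ and $s\mapsto\int_s^Tg_2^2$ are absolutely continuous, so one can choose $0=t_0<t_1<\cdots<t_m=T$ with $\mathcal{C}(\int_{t_i}^{t_{i+1}}g_1+(\int_{t_i}^{t_{i+1}}g_2^2)^{1/2})\le\frac{1}{2}$ for each $i$. Solving backward on $[t_{m-1},T]$ with terminal value $y_T$, then on $[t_{m-2},t_{m-1}]$ with terminal value the solution's value at $t_{m-1}$, and so on, the Banach fixed point theorem on each of the finitely many pieces produces a unique $(y,Y)\in\mathscr{X}$ satisfying \eqref{def of QBSDE}.

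\textbf{Step 3 (the estimate) and the main obstacle.}
Applying the Step~1 bound with $\psi=f(\cdot,y(\cdot),Y(\cdot))$, writing $f(t,y(t),Y(t))=f(t,0,0)+(f(t,y(t),Y(t))-f(t,0,0))$, and using Assumption \ref{Assump 3}, H\"older's inequality and \eqref{Minkowski of p in (1,2]}, one obtains
\begin{align*}
\|(y,Y)\|_{\mathscr{X}}\le{}&\mathcal{C}\Big(\|y_T\|_{p'}+\int_0^T\|f(\tau,0,0)\|_{p'}\,d\tau\Big)\\
&+\mathcal{C}\int_0^Tg_1(\tau)\|y(\tau)\|_{p'}\,d\tau+\mathcal{C}\Big(\int_0^Tg_2(\tau)^2\,d\tau\Big)^{1/2}\|Y\|_{\mathcal{H}^{p'}(0,T)};
\end{align*}
running this over the same partition $t_0<\cdots<t_m$ lets one absorb the $\|Y\|_{\mathcal{H}^{p'}}$-term and close a Gronwall inequality against $g_1$, which yields \eqref{Estimate-back}. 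The decisive point is Step~1: constructing $(y,Y)$ rests on the noncommutative (Clifford/fermion) martingale representation theorem and, crucially, on the representing integrand genuinely lying in $\mathcal{H}^{p'}(0,T)$ with norm controlled by $\|\zeta\|_{p'}$ --- this is exactly where the Burkholder--Gundy equivalence \eqref{Burkholder-Gundy inequality} is needed. The secondary technical nuisance is that the Lipschitz coefficients $g_1,g_2$ are merely $L^1(0,T)$, respectively $L^2(0,T)$, so no global contraction is available on $[0,T]$ and one must localize, either via the finite partition above or, equivalently, via a weighted norm of the form $e^{-\lambda\int_0^t(g_1+g_2^2)}$.
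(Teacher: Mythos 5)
Your proposal is correct and follows essentially the same route as the paper: the affine equation is solved via conditional expectations and the noncommutative martingale representation theorem with the Burkholder--Gundy equivalence controlling the $\mathcal{H}^{p'}$-norm of the integrand, and the semilinear case is then handled by a contraction on subintervals where $\int g_1$ and $\int g_2^2$ are small. Your explicit finite partition of $[0,T]$ merely spells out what the paper abbreviates as ``repeating the above argument,'' so no further comparison is needed.
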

\begin{proof}
We divide the proof into two steps.

\textbf{Step 1.} First, we claim that for any $f(\cdot)\in L_\mathbb{A}^{p'}(\mathscr{C}; L^1(0,T))$ and $y_T\in L^{p'}(\mathscr{C})$, there is a pair $(y(\cdot),Y(\cdot))\in C_{\mathbb{A}}(0,T; L^{p'}(\mathscr{C}))\times \mathcal{H}^{p'}(0,T)$ such that
\begin{equation*}
y(t)=y_T-\int_t^Tf(\tau)d\tau-\int_t^TY(\tau)dW(\tau),\quad t\in[0,T).
\end{equation*}
Let
\begin{equation}\label{Martingale-M-x}
M(t):=\mathbb{E}\left(y_T-\int_0^Tf(\tau)d\tau\bigg|\mathscr{C}_t\right),\quad y(t):=\mathbb{E}\left(y_T-\int_t^Tf(\tau)d\tau\bigg|\mathscr{C}_t\right).
\end{equation}
Thus, $M(0)=y(0)=\mathbb{E}(M(T))$. By the noncommutative martingales representation theorem \cite[Theorem 4.6]{P.X}, 
there exists $Y(\cdot)\in \mathcal{H}^{p'}(0,T)$ such that
\begin{equation}\label{martingale representation of M}
M(t)=M(0)+\int_0^tY(\tau)dW(\tau),\quad  t\in(0, T].
\end{equation}
Hence,
\begin{equation}\label{estimate-X}
\left\|\int_0^TY(\tau)dW(\tau)\right\|_{p'}^2\leq\|M(T)\|_{p'}^2\leq2\left(\|y_T\|_{p'}^2+\left\|\int_0^Tf(\tau)d\tau\right\|_{p'}^2\right).
\end{equation}
From \eqref{martingale representation of M}, we have
\begin{align*}
  y_T-\int_0^Tf(\tau)d\tau&=M(0)+\int_0^TY(\tau)dW(\tau) \\
   &=y(0)+\int_0^TY(\tau)dW(\tau),
\end{align*}
and
\begin{align*}
  y(t)&=M(t)+\int_0^tf(\tau)d\tau\\
  &=y(0)+\int_0^tY(\tau)dW(\tau)+\int_0^tf(\tau)d\tau \\
  &=y_T-\int_t^Tf(\tau)d\tau-\int_t^TY(\tau)dW(\tau).
\end{align*}
Then, $y(\cdot)=y(0)+\int_0^\cdot f(\tau)d\tau+\int_0^\cdot Y(\tau)dW(\tau).$
It is clear that $y(\cdot)\in C_\mathbb{A}(0,T;L^{p'}(\mathscr{C}))$.
From \eqref{Martingale-M-x}, we obtain that
\begin{equation}\label{estimate-x}
\|y(\cdot)\|^2_{C_\mathbb{A}(0,T; L^{p'}(\mathscr{C}))}\leq  \mathcal{C}\left(\|y_T\|_{p'}^2+\left\|\int_0^Tf(t)dt\right\|_{p'}^2\right).
\end{equation}
On the other hand, from \eqref{estimate-X} and \eqref{estimate-x}, we have
\begin{equation}\label{estimate-x-X}
\|(y(\cdot),Y(\cdot)\|^2_{C_\mathbb{A}(0,T; L^{p'}(\mathscr{C}))\times \mathcal{H}^{p'}(0,T)}\leq \mathcal{C}\left(\|y_T\|^2_{p'}+\left\|\int_0^Tf(t)dt\right\|_{p'}^2\right).
\end{equation}

\textbf{Step 2.} We  prove that the pair $(y(\cdot),Y(\cdot))\in C_\mathbb{A}(0,T; L^{p'}(\mathscr{C}))\times \mathcal{H}^{p'}(0,T)$ is the unique solution to \eqref{BQSDE}. 
For fixed $T_1\in[0,T)$ and any $(z(\cdot),Z(\cdot))\in C_\mathbb{A}(T_1,T;L^{p'}(\mathscr{C}))$ $\times \mathcal{H}^{p'}(T_1,T)$. 
We consider the following BQSDE
\begin{equation}\label{NBQSDE}
\left\{
\begin{aligned}
  dy(t)&=f(t,z(t),Z(t))dt+Y(t)dW(t),\ {\rm in}\ [T_1,T),\\
 y(T)&=y_T.
\end{aligned}
\right.
\end{equation}
By the result in \textbf{Step 1}, the pair $(y(\cdot),Y(\cdot))\in C_\mathbb{A}(T_1,T;L^{p'}(\mathscr{C}))\times\mathcal{H}^{p'}(T_1,T)$ is the unique solution to \eqref{NBQSDE}. 
Then, we define a map
$$\Gamma: C_\mathbb{A}(T_1,T;L^{p'}(\mathscr{C}))\times \mathcal{H}^{p'}(T_1,T) \to C_\mathbb{A}(T_1,T;L^{p'}(\mathscr{C}))\times \mathcal{H}^{p'}(T_1,T)$$
by
\begin{equation*}
\Gamma(z(\cdot),Z(\cdot))= (y(\cdot), Y(\cdot)).
\end{equation*}
Next, we claim that, 
for $T_1$ being sufficiently close to $T$, 
\begin{equation}
\label{The fixed point}
\begin{aligned}
     &\|\Gamma(z,Z)-\Gamma(\bar{z},\bar{Z})\|_{C_\mathbb{A}(T_1,T;L^{p'}(\mathscr{C}))\times \mathcal{H}^{p'}(T_1,T)}\\
     &\indent\indent\leq \frac{1}{2}\|(z(\cdot)-\bar{z}(\cdot),Z(\cdot)-\bar{Z}(\cdot))\|_{C_\mathbb{A}(T_1,T;L^{p'}(\mathscr{C}))\times \mathcal{H}^{p'}(T_1,T)},\\
     &  \indent\indent\indent\forall (z,Z), (\bar{z},\bar{Z})\in C_\mathbb{A}(T_1,T; L^{p'}(\mathscr{C}))\times \mathcal{H}^{p'}(T_1,T).
\end{aligned}
\end{equation}
To show \eqref{The fixed point},  let
$$(\hat{y}(\cdot),\hat{Y}(\cdot)):=\Gamma(z,Z)-\Gamma(\bar{z},\bar{Z}),\quad
\hat{f}(\cdot):=f(\cdot,z(\cdot),Z(\cdot))-f(\cdot,\bar{z}(\cdot),\bar{Z}(\cdot)).$$
Then, $(\hat{y}(\cdot), \hat{Y}(\cdot))$ is solution to
\begin{equation}\label{equation-QSDE of [T_1,T]}
\left\{
\begin{aligned}
  d\hat{y}(t)&=\hat{f}(t)dt+\hat{Y}(t)dW(t),\ {\rm in}\ [T_1,T),\\
 \hat{y}(T)&=0.
\end{aligned}
\right.
\end{equation}
By  Assumption \ref{Assump 3}, the H\"{o}lder inequality and \eqref{Minkowski of p in (1,2]}, we have
\begin{equation}\label{estimate of contraction map}
  \begin{aligned}
    &\|(\hat{y}(\cdot), \hat{Y}(\cdot))\|^2_{C_\mathbb{A}(T_1,T; L^{p'}(\mathscr{C}))\times \mathcal{H}^{p'}(T_1,T)} \leq \mathcal{C}\left\|\int_{T_1}^T\hat{f}(t)dt\right\|^2_{p'} \\
      &\leq \mathcal{C}\left(\int_{T_1}^T\left\|f(t,z(t),Z(t))-f(t,\bar{z}(t),\bar{Z}(t))\right\|_{p'}dt\right)^2\\
      &\leq \mathcal{C}\|z(\cdot)-\bar{z}(\cdot)\|^2_{C_\mathbb{A}(T_1,T;L^{p'}(\mathscr{C}))}\left(\int _{T_1}^T|g_1(t)|dt\right)^2\\
      &\indent\indent\indent\indent+\mathcal{C}\|Z(\cdot)-\bar{Z}(\cdot)\|_{\mathcal{H}^{p'}(T_1,T)}^2\int_{T_1}^T|g_2(t)|^2dt\\
      &\leq \mathcal{C}\left\{\left(\int _{T_1}^T|g_1(t)|dt\right)^2+\int_{T_1}^T|g_2(t)|^2dt\right\}\\
      &\indent\indent\indent\indent\cdot\|(z(\cdot)-\bar{z}(\cdot),Z(\cdot)-\bar{Z}(\cdot))\|^2_{C_\mathbb{A}(T_1,T;L^{p'}(\mathscr{C}))\times \mathcal{H}^{p'}(T_1,T)},
  \end{aligned}
\end{equation}
and
\begin{equation}\label{x X estimate}
  \begin{aligned}
    &\|(y(\cdot), Y(\cdot))\|^2_{C_\mathbb{A}(T_1,T;L^{p'}(\mathscr{C}))\times\mathcal{H}^{p'}(T_1,T)}  \leq \mathcal{C}\left\{\|y_T\|^2_{p'}+\left(\int_{T_1}^T\|f(t,z(t),Z(t))\|_{p'} dt\right)^2\right\} \\
      &\leq \mathcal{C}\left\{\|y_T\|_{p'}^2+\left(\int_{T_1}^T\|f(t,0,0)\|_{p'}+g_1(t)\|z(t)\|_{p'}+g_2(t)\|Z(t)\|_{p'}dt\right)^2\right\}\\
      &\indent+\mathcal{C}\left\{\left(\int_{T_1}^T|g_1(t)|dt\right)^2+\int _{T_1}^T  |g_2(t)|^2dt\right\} \left\|(z(\cdot),Z(\cdot))\right\|^2_{C_\mathbb{A}(T_1,T;L^{p'}(\mathscr{C}))\times \mathcal{H}^{p'}(T_1,T)}.
  \end{aligned}
\end{equation}
Let us choose $T_1\in[0,T)$ such that
\begin{equation}\label{coefficient of contraction map}
 \mathcal{C}\left\{\left(\int _{T_1}^T |g_1(t)|dt\right)^2+ \int_{T_1}^T|g_2(t)|^2dt\right\} \leq \frac{1}{4}.
\end{equation}
Then, by \eqref{estimate of contraction map}, we obtain \eqref{The fixed point}. This shows that the map $\Gamma$ is contractive. Hence, there exists a unique fixed point,  which is a solution to \eqref{BQSDE} on $[T_1,T]$.
Moreover,  from \eqref{x X estimate} and \eqref{coefficient of contraction map}, we obtain that
\begin{equation}\label{finally}
\|(y(\cdot), Y(\cdot))\|_{C_\mathbb{A}(T_1,T;L^{p'}(\mathscr{C}))\times \mathcal{H}^{p'}(T_1,T)} \leq \mathcal{C} \left(\|y_T\|_{p'}+\int_{T_1}^T\|f(t,0,0)\|_{p'}dt\right).
\end{equation}
Repeating the above argument, we obtain the existence of solution to
\eqref{BQSDE}.
Finally, the uniqueness and the estimate \eqref{Estimate-back} follow from \eqref{finally}.
\end{proof}

By Assumption \ref{Assump 1} and \cite[Proposition 3.3]{P.X}, there exists constant $\mathcal{C}$ such that
\begin{equation*}
\begin{aligned}
&\Big\|\left\{D_{x}(t)^*\phi+\left(F^e_x(t)-F^o_x(t)+G_x(t)\right)^*\left(\Phi_e-\Phi_o\right)-L_x(t)\right\}\\
&\indent-\left\{D_{x}(t)^*\widetilde{\phi}+\left(F^e_x(t)-F^o_x(t)+G_x(t)\right)^*\left(\widetilde{\Phi}_e-\widetilde{\Phi}_o\right)-L_x(t)\right\}\Big\|_{p'}\\
=&\left\|D_{x}(t)^*(\phi-\widetilde{\phi})+\left(F^e_x(t)-F^o_x(t)+G_x(t)\right)^*\left(\Phi_e-\Phi_o-\widetilde{\Phi}_e+\widetilde{\Phi}_o\right)\right\|_{p'}\\
\leq&\mathcal{C}\left\|\phi-\widetilde{\phi}\right\|_{p'}+\mathcal{C}\left\|\Phi_e-\widetilde{\Phi}_e\right\|_{p'}+\mathcal{C}\left\|\widetilde{\Phi}_o-\Phi_o\right\|_{p'}\\
\leq&\mathcal{C}\left\|\phi-\widetilde{\phi}\right\|_{p'}+2\mathcal{C}\left\|\Phi-\widetilde{\Phi}\right\|_{p'},
\end{aligned}
\end{equation*}
for any $\phi, \widetilde{\phi}\in L^{p'}(\mathscr{C})$, $\Phi, \widetilde{\Phi}\in \mathcal{H}^{p'}(0,T)$. Hence, by Theorem \ref{the solution to QBSDEs}, \eqref{BSDE-p-even}  has a unique solution in $C_\mathbb{A}(0,T;L^{p'}(\mathscr{C}))\times \mathcal{H}^{p'}(0,T)$.
\begin{rem}
In the forthcoming paper \cite{W.W}, we will study the solution to \eqref{BSDE-P-even} by the relaxed transposition method \cite[Chapter 6]{L.Z-2014} and \cite[Chapter 12.4]{L.Z-2020}.
\end{rem}

\end{document}